\DeclarePairedDelimiter{\ceil}{\lceil}{\rceil}
\DeclareMathOperator*{\argmax}{argmax}
\DeclareMathOperator*{\argmin}{argmin}
\newcommand{\algostep}[1]{{\small\texttt{#1:}}\xspace}
\def\prob{\mathbb{P}}
\def\real{\mathbb{R}}
\def\vspecdots{\vbox{\baselineskip=2pt \lineskiplimit=0pt
    \kern2pt \hbox{.}\hbox{.}\hbox{.}}}
\newcommand{\until}[1]{\{1,\dots, #1\}}
\newcommand{\subscr}[2]{#1_{\textup{#2}}}
\newcommand{\map}[3]{#1: #2 \rightarrow #3}
\newcommand{\E}{\mathbb{E}}
\newcommand{\Hrt}{\mathbb{J}}
\newcommand{\Hrate}{\subscr{\mathbb{H}}{rate}}
\DeclareMathOperator{\vecz}{vec}
\newcommand\oprocendsymbol{\hbox{$\square$}}
\newcommand\oprocend{\relax\ifmmode\else\unskip\hfill\fi\oprocendsymbol}
\DeclareSymbolFont{bbold}{U}{bbold}{m}{n}
\DeclareSymbolFontAlphabet{\mathbbold}{bbold}
\newcommand{\vect}[1]{\mathbbold{#1}}
\newcommand{\vectorones}[1][]{\vect{1}_{#1}}
\newtheorem{theorem}{Theorem}
\newtheorem{lemma}[theorem]{Lemma}
\newtheorem{remark}[theorem]{Remark}
\newtheorem{example}[theorem]{Example}
\newtheorem{examples}[theorem]{Examples}
\newtheorem{definition}[theorem]{Definition}
\newtheorem{problem}{Problem}
\title{\LARGE \bf
Markov Chains with Maximum Return Time Entropy \\for Robotic Surveillance
}
\author{Xiaoming Duan,~\IEEEmembership{Student member,~IEEE}, Mishel George, Francesco Bullo,~\IEEEmembership{Fellow,~IEEE}
\thanks{This work has been supported in part by Air Force Office of Scientific
Research award FA9550-15-1-0138.}
\thanks{Xiaoming Duan, Mishel George, and Francesco Bullo are with the Mechanical
Engineering Department and the Center of Control, Dynamical
Systems and Computation, UC Santa Barbara, CA 93106-5070, USA.
{\tt\small \{xmduan,mishel,bullo\}@engineering.ucsb.edu}}%
}
\begin{document}

\maketitle
\thispagestyle{empty}
\pagestyle{empty}

\begin{abstract}
  Motivated by robotic surveillance applications, this paper studies the
  novel problem of maximizing the return time entropy of a Markov chain,
  subject to a graph topology with travel times and stationary
  distribution.  The return time entropy is the weighted average, over all
  graph nodes, of the entropy of the first return times of the Markov
  chain; this objective function is a function series that does not admit
  in general a closed form.

  The paper features theoretical and computational contributions.  First,
  we obtain a discrete-time delayed linear system for the return time
  probability distribution and establish its convergence properties. We
  show that the objective function is continuous over a compact set and
  therefore admits a global maximum; a unique globally-optimal solution is
  known only for complete graphs with unitary travel times.  We then
  establish upper and lower bounds between the return time entropy and the
  well-known entropy rate of the Markov chain.  To compute the optimal
  Markov chain numerically, we establish the asymptotic equality between
  entropy, conditional entropy and truncated entropy, and propose an
  iteration to compute the gradient of the truncated entropy. Finally, we
  apply these results to the robotic surveillance problem. Our numerical
  results show that, for a model of rational intruder over prototypical
  graph topologies and test cases, the maximum return time entropy chain
  performs better than several existing Markov chains.
\end{abstract}

\section{Introduction}
\paragraph*{Problem description and motivation}
Given a Markov chain, the first return time of a given node is the first
time that the random walker returns to the starting node; this is a
discrete random variable with infinite support and whose randomness is
measured by its entropy. In this paper, given a strongly connected directed
graph with integer-valued travel times (weights) and a prescribed
stationary distribution, we study Markov chains with maximum return time
entropy. Here the return time entropy of a Markov chain is a weighted
average of the entropy of different states' return times with weights equal
to the stationary distribution.

This optimization problem is motivated by robotic applications.  We design
stochastic surveillance strategies with an entropy maximization objective
in order to thwart intruders who plan their attacks based on observations
of the surveillance agent. The randomness in the first return time is
desirable because an intelligent intruder observing the inter-visit times
of the surveillance agent is confronted with a maximally unpredictable
return pattern by the surveillance agent.

\paragraph*{Literature review}
Ekroot \emph{et al.} studied the entropy of Markov trajectories in
\cite{LE-TMC:93}, i.e., the entropy of paths with specified initial and
final states. The authors establish an equivalence relationship between the
entropy of return Markov trajectories (paths with the same initial and
final states) and the entropy rate of the Markov chains.  Compared
with~\cite{LE-TMC:93}, we study here the return time random variable, by
lumping return trajectories with the same length.  Importantly, our
formulation incorporates travel times, as motivated by robotic
applications.

The problem of designing robotic surveillance strategies has been widely
studied \cite{NA-SK-GAK:08, SA-EF-SLS:14,
  HX-BF-FF-BD-AP-MT-MD-FW-AR-MN-JM:17,JY-SK-DR:15}. Stochastic surveillance
strategies, which emphasize the unpredictability of the movement of the
patroller, are desirable since they are capable of defending against
intelligent intruders who aim to avoid detection/capture. One of the main
approaches to the design of robotic stochastic surveillance strategies is
to adopt Markov chains; e.g., see the early reference~\cite{JG-JB:05} and
the more recent~\cite{BA-SDB:15,  SB-SJC-FYH:17,GC-AS:11,
  NN-AR-JVH-VI:16}. Srivastava \emph{et al.}~\cite{KS-DMS-MWS:09}
justified the Markov chain-based stochastic surveillance strategy by
showing that for the deterministic strategies, in addition to
predictability, it is also hard to specify the visit frequency. However,
for the finite state irreducible Markov chains, the visit frequency is
embedded naturally in the stationary distribution. Patel \emph{et al.}
\cite{RP-PA-FB:14b} studied the Markov chains with minimum weighted mean
hitting time where weights are travel times on edges. For the class of
reversible Markov chains, they formulated the problem as a convex
optimization problem. An extension of the mean hitting time to the
multi-agent case was studied in \cite{RP-AC-FB:14k}. Asghar \emph{et al.}
\cite{ABA-SLS:16} introduced different intruder models and designed a
pattern search-based algorithm to solve for a Markov chain that minimizes
the expected reward of the intruders. Recently, George \emph{et al.}
\cite{MG-SJ-FB:17b} studied and quantified the unpredictability of the
Markov chains and designed the maxentropic surveillance strategies by
maximizing the entropy rate of Markov chains \cite{EA-NCM:14,
  YC-TG-MP-AT:17a}. Compared with \cite{MG-SJ-FB:17b}, our problem
formulation features a new notion of entropy, a directed graph topology,
and travel times; these three features render the results potentially more
widely applicable and more relevant (see also the performance comparison
among multiple Markov chains later in the paper).

\paragraph*{Contributions}
In this paper, we propose a new metric that measures the unpredictability
of the Markov chains over a directed graph with travel times. This novel
formulation is of interest in the general study of Markov chains as well as
for its applications to robotic surveillance. The main contributions of
this paper are sixfold. First, we introduce and analyze a discrete-time
delayed linear system for the return time probabilities of the Markov
chains. This system incorporates integer-valued travel times on the
directed graph. Second, we propose to characterize the unpredictability of
a Markov chain by the return time entropy and formulate an entropy
maximization problem. Third, we prove the well-posedness of the return time
entropy maximization problem, i.e., the objective function is continuous
over a compact set and thus admits a global maximum. For the case of
unitary travel times, we derive an upper bound for the return time entropy
and solve the problem analytically for the complete graph. Fourth, we
compare the return time entropy with the entropy rate of Markov chains;
specifically, we prove that the return time entropy is lower bounded by the
entropy rate and upper bounded by the number of nodes times of the entropy
rate. Fifth, in order to compute Markov chains with maximum return time
entropy numerically, we truncate the return time entropy and show that the
truncated entropy is asymptotically equivalent to both the original
objective and the practically useful conditional return time entropy. We
also characterize the gradient of the truncated return time entropy and use
it to implement a gradient projection method. Sixth, we apply our solution
to different prototypical robotic surveillance scenarios and test cases and
show that, for a model of rational intruder, the Markov chain with maximum
return time entropy outperforms several existing Markov chains.

\paragraph*{Paper organization}
This paper is organized as follows. We formulate the return time entropy
maximization problem in Section~\ref{sec:ProblemFormulation}. We establish
the properties of the return time entropy in
Section~\ref{sec:properties}. The approximation analysis and the gradient
formulas are provided in Section~\ref{sec:approximation}. We present the
simulation results regarding the robotic surveillance problem in
Section~\ref{sec:Simulation}. Section~\ref{sec:Conclusion} concludes the
paper.

\subsection*{Notation and useful lemmas}
Let $\real$, $\mathbb{Z}_{\geq0}$, and $\mathbb{Z}_{>0}$ denote the set of
real numbers, nonnegative and positive integers, respectively. Let
$\mathbb{1}_n$ and $\mathbb{0}_n$ denote column vectors in $\real^n$ with
all entries being $1$ and $0$. $I_n\in \real^{n\times n}$ is the identity
matrix. $\mathbb{e}_i$ denotes the $i$-th vector in the standard basis,
whose dimension will be made clear when it appears. $[S]$ denotes a
diagonal matrix with diagonal elements being $S$ if $S$ is a vector, or
being the diagonal of $S$ if $S$ is a square matrix. Let $\otimes$ denote
the Kronecker product. $\vecz(\cdot)$ is the vectorization operator that
converts a matrix into a column vector. The following lemmas are useful.


\begin{lemma}(A uniform bound for stable matrices \cite[Proposition D.3.1]{MHAD-RBV:85})\label{lemma:SolutionBound}
  Assume the matrix subset $\mathcal{A}\subset\real^{n\times{n}}$ is
  compact and satisfies
  \begin{equation}\nonumber
    \rho_{\mathcal{A}} := \max_{A\in\mathcal{A}}\rho(A)<1.
  \end{equation}
  Then for any $\lambda\in(\rho_{\mathcal{A}},1)$ and for any induced
  matrix norm $\|\cdot\|$, there exists $c>0$ such that
  \begin{equation}\nonumber
    \|A^k\| \leq c\lambda^k, \quad \text{for all } A\in\mathcal{A}\text{ and } k\in\mathbb{Z}_{\geq0} .
  \end{equation}
\end{lemma}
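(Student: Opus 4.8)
The plan is to reduce the infinitely many inequalities (one for each $k$) to a single uniform estimate on a fixed power $A^M$, and then propagate that estimate to all $k$ by submultiplicativity and Euclidean division. Throughout I fix $\lambda\in(\rho_{\mathcal{A}},1)$ and an induced norm $\|\cdot\|$, and I work with the rescaled matrices $A/\lambda$, for which $\rho(A/\lambda)=\rho(A)/\lambda\le\rho_{\mathcal{A}}/\lambda<1$ for every $A\in\mathcal{A}$.

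First I establish a \emph{uniform} exponent. For each fixed $A\in\mathcal{A}$, since $\rho(A/\lambda)<1$ the powers $(A/\lambda)^k$ converge to zero (equivalently, by Gelfand's formula $\lim_k\|(A/\lambda)^k\|^{1/k}=\rho(A/\lambda)<1$), so there is an exponent $m_A$ with $\|(A/\lambda)^{m_A}\|<1$. The map $B\mapsto\|(B/\lambda)^{m_A}\|$ is continuous, being a polynomial in the entries of $B$ followed by a norm, so this strict inequality persists on an open neighborhood $U_A$ of $A$. The sets $\{U_A\}_{A\in\mathcal{A}}$ cover the compact set $\mathcal{A}$, so finitely many $U_{A_1},\dots,U_{A_r}$ suffice.

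The main obstacle is that the exponents $m_{A_1},\dots,m_{A_r}$ differ, so I cannot yet read off a single power that works for every $A$. This is resolved by submultiplicativity: if $\|(B/\lambda)^{m}\|<1$ then $\|(B/\lambda)^{pm}\|\le\|(B/\lambda)^{m}\|^{p}<1$ for every positive integer $p$. Hence, taking $M$ to be a common multiple of $m_{A_1},\dots,m_{A_r}$, every $B\in\mathcal{A}$ lies in some $U_{A_i}$ and therefore satisfies $\|(B/\lambda)^{M}\|<1$. Since $B\mapsto\|(B/\lambda)^{M}\|$ is continuous and $\mathcal{A}$ is compact, its maximum $\gamma:=\max_{A\in\mathcal{A}}\|(A/\lambda)^{M}\|$ is attained and satisfies $\gamma<1$; in particular $\|A^{M}\|\le\lambda^{M}$ for all $A\in\mathcal{A}$.

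Finally I propagate this to arbitrary $k$. Writing $k=qM+s$ with $0\le s<M$ and using submultiplicativity, $\|A^{k}\|\le\|A^{M}\|^{q}\,\|A^{s}\|\le\lambda^{qM}\|A^{s}\|=\lambda^{k}\lambda^{-s}\|A^{s}\|$. The finitely many quantities $\|A^{s}\|$ for $0\le s<M$ are uniformly bounded over the compact set $\mathcal{A}$ by $C_0:=\max_{0\le s<M}\max_{A\in\mathcal{A}}\|A^{s}\|<\infty$, again by continuity and compactness, and $\lambda^{-s}\le\lambda^{-(M-1)}$ because $\lambda<1$. Collecting these factors yields $\|A^{k}\|\le c\,\lambda^{k}$ with $c:=C_0\,\lambda^{-(M-1)}$, uniformly in $A\in\mathcal{A}$ and $k\in\mathbb{Z}_{\ge0}$, which is the claim. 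The only genuinely delicate point is the extraction of the uniform exponent $M$ through the covering-plus-common-multiple argument; everything else is continuity-and-compactness bookkeeping.
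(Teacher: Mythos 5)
Your proof is correct. Note that the paper itself does not prove this lemma at all: it is imported by citation from \cite[Proposition D.3.1]{MHAD-RBV:85} and used as a black box, so there is no in-paper argument to compare against. Your covering-plus-common-multiple construction is a sound and self-contained way to get the uniform exponent: the pointwise step via Gelfand's formula, the persistence of the strict inequality $\|(A/\lambda)^{m_A}\|<1$ on a neighborhood by continuity of $B\mapsto\|(B/\lambda)^{m_A}\|$, and the passage from the finitely many exponents $m_{A_1},\dots,m_{A_r}$ to a single $M$ via submultiplicativity ($\|(B/\lambda)^{pm}\|\le\|(B/\lambda)^{m}\|^{p}$) are all valid. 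The final bookkeeping is also right: with $k=qM+s$, $0\le s<M$, you get $\|A^k\|\le\lambda^{k}\lambda^{-s}\|A^s\|$, and both $\lambda^{-s}\le\lambda^{-(M-1)}$ and $C_0=\max_{0\le s<M}\max_{A\in\mathcal{A}}\|A^s\|<\infty$ follow from $\lambda<1$ and compactness, giving $c=C_0\lambda^{-(M-1)}>0$ (indeed $c\ge 1$ since the $s=0$ term contributes $\|I_n\|=1$). The one subtlety worth flagging is exactly the one you flagged: a pointwise-in-$A$ application of Gelfand's formula alone does not give uniformity over $\mathcal{A}$, and your compactness argument is precisely what closes that gap.
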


\begin{lemma}(Weierstrass M-test \cite[Theorem 7.10]{WR:76})\label{lemma:Weierstrass} Given a set
  $\mathcal{X}$, consider the sequence of functions
  $\{\map{f_k}{\mathcal{X}}{\real}\}_{k\in\mathbb{Z}_{>0}}$. If there
  exists a sequence of scalars $\{M_k\in\real\}_{k\in\mathbb{Z}_{>0}}$ satisfying
  $\sum_{k=1}^\infty M_k<\infty$ and
  \begin{equation}\nonumber
    |f_k(x)|\leq M_k, \quad\text{for all } x\in \mathcal{X},k\in \mathbb{Z}_{>0},
  \end{equation}
  then $\sum_{k=1}^\infty f_k$ converges uniformly on $\mathcal{X}$.
\end{lemma}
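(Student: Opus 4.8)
The plan is to reduce uniform convergence of the series to a uniform Cauchy condition on its partial sums, exploiting the completeness of $\real$. Define the partial sums $\map{S_n}{\mathcal{X}}{\real}$ by $S_n(x) := \sum_{k=1}^n f_k(x)$. The goal is to show that $\{S_n\}_{n\in\mathbb{Z}_{>0}}$ is uniformly Cauchy, i.e., that for every $\varepsilon>0$ there is an index $N$, depending on $\varepsilon$ but not on $x$, such that $|S_m(x)-S_n(x)|<\varepsilon$ for all $m>n\geq N$ and all $x\in\mathcal{X}$.

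First I would use the summability hypothesis $\sum_{k=1}^\infty M_k<\infty$. Since the scalar partial sums $T_n:=\sum_{k=1}^n M_k$ form a convergent sequence in $\real$, they are Cauchy; hence given $\varepsilon>0$ there exists $N\in\mathbb{Z}_{>0}$ with $\sum_{k=n+1}^m M_k = T_m-T_n<\varepsilon$ for all $m>n\geq N$. The key point is that $N$ is chosen without reference to any $x$.

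Next, for an arbitrary $x\in\mathcal{X}$ and $m>n\geq N$, the triangle inequality together with the pointwise domination $|f_k(x)|\leq M_k$ gives
\[
|S_m(x)-S_n(x)| = \Big|\sum_{k=n+1}^m f_k(x)\Big| \leq \sum_{k=n+1}^m |f_k(x)| \leq \sum_{k=n+1}^m M_k < \varepsilon.
\]
Because this estimate holds uniformly over $x\in\mathcal{X}$, the sequence $\{S_n\}$ is uniformly Cauchy. Finally, invoking the Cauchy criterion for uniform convergence---which relies on the completeness of $\real$---I would conclude that $\{S_n\}$ converges uniformly on $\mathcal{X}$, i.e., that $\sum_{k=1}^\infty f_k$ converges uniformly.

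This statement is a textbook result, so there is no genuine obstacle; the only subtle step, and the one I would emphasize, is ensuring that the Cauchy index $N$ is genuinely independent of $x$. The domination by the $x$-free bounds $M_k$ is precisely what decouples the tail estimate from the point $x$, and everything else is a direct application of the triangle inequality and completeness of $\real$.
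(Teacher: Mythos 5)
Your proof is correct and is exactly the standard argument behind the cited result (Rudin, Theorem 7.10): the paper itself gives no proof, simply quoting the lemma, and the reference proves it precisely via the uniform Cauchy criterion with the tail of $\sum_k M_k$ supplying an $x$-independent bound. Nothing to add.
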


\begin{lemma}(Geometric distribution generates maximum entropy \cite{SG-AS:85})\label{lemma:MaximumEntropy}
  Given a discrete random variable $Y \in \mathbb{Z}_{>0}$ and
  $\mathbb{E}[Y] = \mu\geq 1$, the probability distribution with
  maximum entropy is
  \begin{equation}\nonumber
    \mathbb{P}[Y=k] = (1-\frac{1}{\mu})^{k-1} \frac{1}{\mu},\quad k\in\mathbb{Z}_{>0},
  \end{equation}
  with entropy
  \begin{equation}\label{eq:maximumentropy}
    \mathbb{H}(Y) = \mu \log \mu -(\mu-1)\log(\mu-1).
  \end{equation}
\end{lemma}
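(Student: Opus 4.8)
The plan is to establish this as a maximum-entropy-under-a-moment-constraint result, and I would prove it via the non-negativity of relative entropy (Gibbs' inequality) rather than via Lagrange multipliers. The Lagrangian approach does produce the correct geometric candidate, but it leaves open the second-order conditions and the infinite-dimensional feasibility subtleties; the relative-entropy argument instead delivers both global optimality and the equality case in a single step.

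First I would fix the candidate distribution $q_k = (1-\frac{1}{\mu})^{k-1}\frac{1}{\mu}$ and verify that it is a genuine probability distribution on $\mathbb{Z}_{>0}$ with $\mathbb{E}[Y]=\mu$. Writing $r = 1-\frac{1}{\mu}\in[0,1)$, both facts follow from the closed forms $\sum_{k\geq 1} r^{k-1} = \frac{1}{1-r}$ and $\sum_{k\geq 1} k\,r^{k-1} = \frac{1}{(1-r)^2}$, which give total mass $1$ and mean $\frac{1}{1-r}=\mu$.

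Next, for an arbitrary distribution $p$ on $\mathbb{Z}_{>0}$ with $\mathbb{E}[Y]=\mu$, I would use the decomposition
\begin{equation}\nonumber
  \mathbb{H}(p) = -\sum_{k=1}^\infty p_k \log\frac{p_k}{q_k} - \sum_{k=1}^\infty p_k \log q_k .
\end{equation}
The first term equals $-D(p\|q)$, where $D(p\|q)=\sum_{k} p_k \log\frac{p_k}{q_k}\geq 0$ is the relative entropy, with equality if and only if $p=q$. For the second term I would substitute $\log q_k = (k-1)\log(1-\frac{1}{\mu}) - \log\mu$ and invoke the constraints $\sum_k p_k = 1$ and $\sum_k (k-1)p_k = \mu-1$ to conclude that it equals the $p$-independent constant $-(\mu-1)\log(1-\frac{1}{\mu}) + \log\mu$. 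This yields the bound $\mathbb{H}(p)\leq -(\mu-1)\log(1-\frac{1}{\mu}) + \log\mu$ with equality exactly at $p=q$, which simultaneously proves that $q$ is the unique maximizer and identifies the maximal value. The stated closed form then follows from the algebraic simplification $-(\mu-1)\log(1-\frac{1}{\mu}) + \log\mu = \mu\log\mu - (\mu-1)\log(\mu-1)$, using $1-\frac{1}{\mu}=\frac{\mu-1}{\mu}$.

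The main obstacle is not conceptual but a matter of care with convergence and the boundary. I must confirm that the series defining $\sum_k p_k \log q_k$ and the entropy itself converge, which holds because $p$ has finite mean $\mu$ while $\log q_k$ grows only linearly in $k$, so the Weierstrass-type control of tails applies. I must also treat the degenerate case $\mu=1$ separately: there $r=0$, $q$ collapses to the point mass at $k=1$ (the only feasible distribution), and the formula correctly returns $0$ under the convention $0\log 0 = 0$.
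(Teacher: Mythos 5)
Your proof is correct. Note that the paper itself offers no proof of this lemma: it is stated as a known background fact with a citation to \cite{SG-AS:85}, so there is no in-paper argument to compare against. Your relative-entropy (Gibbs' inequality) argument is the standard and cleanest route: the verification that the geometric candidate $q$ is a valid distribution with mean $\mu$, the decomposition $\mathbb{H}(p)=-D(p\|q)-\sum_k p_k\log q_k$, the observation that the second term depends on $p$ only through the constraints $\sum_k p_k=1$ and $\sum_k k\,p_k=\mu$, and the algebraic reduction to $\mu\log\mu-(\mu-1)\log(\mu-1)$ are all sound. Your two points of care are also the right ones: the absolute convergence of $\sum_k p_k\log q_k$ follows from the finite-mean constraint because $\log q_k$ is affine in $k$ (and this in turn forces $\mathbb{H}(p)$ to be finite, since $\mathbb{H}(p)\leq -\sum_k p_k\log q_k$), and the degenerate case $\mu=1$ must be handled by the convention $0\log 0=0$, under which the stated formula correctly returns zero for the point mass at $k=1$. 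The only additional bonus your argument delivers beyond the statement as quoted is uniqueness of the maximizer, via the equality case of $D(p\|q)\geq 0$.
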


\section{Problem formulation}\label{sec:ProblemFormulation}
We start by reviewing the basics of discrete-time Markov chains.  A
finite-state discrete-time Markov chain with state space $\until{n}$ is a
sequence of random variables taking values in $\until{n}$ and satisfying
the Markov property. Let $X_k$ be the random variable at time
$k\in\mathbb{Z}_{\geq0}$, then a time-homogeneous Markov chain satisfies,
for all $i,j\in\until{n}$ and $k\in\mathbb{Z}_{\geq0}$, $
\mathbb{P}(X_{k+1}=j\,|\,X_{k}=i,\dots,X_{1}=i_{1},X_{0}=i_0) =\mathbb{P}(X_{k+1}=j\,|\,X_{k}=i)=p_{ij},
$
where $p_{ij}$ is the transition probability from state $i$ to state $j$
and $P=\{p_{ij}\}\in\real^{n\times{n}}$ is the transition matrix satisfying
$P\geq 0$ and $P\mathbb{1}_n=\mathbb{1}_n$; see \cite{JGK-JLS:76},
\cite{JRN:97}. A probability distribution $\bm{\pi}\in\real^n$ is
\emph{stationary} for the Markov chain with transition matrix $P$ if it
satisfies $\bm{\pi}\geq0$, $\bm{\pi}^\top\mathbb{1}_n=1$ and
$\bm{\pi}^\top=\bm{\pi}^\top P$. A Markov chain is \emph{irreducible} if
its transition diagram is a strongly connected graph. A Markov chain that
satisfies the detailed balance equation $[\bm{\pi}]P=P^\top[\bm{\pi}]$ is
\emph{reversible}. A discrete-time Markov chain is also referred to as a random walk on a
graph.

\subsection{Return time of random walks}\label{sec:returntime}

In this paper, we consider a strongly connected directed weighted graph
$\mathcal{G}=\{V,\mathcal{E},W\}$, where $V$ denotes the set of $n$ nodes
$\until{n}$, $\mathcal{E}\subset V\times V$ denotes the set of edges, and
$W\in\mathbb{Z}_{\geq0}^{n\times n}$ is the integer-valued weight (travel
time) matrix with $w_{ij}$ being the one-hop travel time from node $i$ to
node $j$. If $(i,j)\notin\mathcal{E}$, then $w_{ij}=0$; if
$(i,j)\in\mathcal{E}$, then $w_{ij}\geq 1$. Let
$w_{\max}=\max_{i,j}\{w_{ij}\}$ be the maximum travel time.

Given the graph $\mathcal{G}=\{V,\mathcal{E},W\}$, let $X_k
\in \until{n}$ denote the location of a random walk on $\mathcal{G}$ following a
transition matrix $P$ at time $k \in \mathbb{Z}_{\geq0}$. For any
pair of nodes $i,j\in V$, the \emph{first hitting time} from $i$ to $j$,
denoted by $T_{ij}$, is the first time the random walk reaches node $j$
starting from node $i$, that is
\begin{equation}\label{eq:passagetime}
T_{ij}=\min\Big\{\sum_{k'=0}^{k-1}w_{X_{k'}X_{k'+1}}\,|\,X_0=i,X_k=j,k\geq1\Big\}.
\end{equation}
In particular, the \emph{return time} $T_{ii}$ of node $i$ is the first
time the random walk returns to node $i$ starting from node $i$. Let the
$(i,j)$-th element of the \emph{first hitting time probability matrix}
$F_k$ denote the probability that the random walk reaches node $j$ for
the first time in exactly $k$ time units starting from node $i$, i.e.,
$F_k(i,j)=\mathbb{P}(T_{ij}=k)$.

\subsection{Return time entropy of random walks}
For an irreducible Markov chain, the return time $T_{ii}$ of each state $i$
is a well-defined random variable over $\mathbb{Z}_{>0}$. We define the
\emph{return time entropy of state $i$} by
\begin{align}
  \mathbb{H}(T_{ii})&=-\sum\limits_{k=1}^\infty \mathbb{P}(T_{ii}=k)\log \mathbb{P}(T_{ii}=k) \nonumber \\
  &=-\sum\limits_{k=1}^\infty F_k(i,i)\log F_k(i,i), \label{eq:ret-time-entropy}
\end{align}
where the logarithm is the natural logarithm and $0\log0=0$.

\begin{remark}(Coprime travel times)
  The return time entropy of states does not change when we scale the
  travel times on all edges simultaneously by the same factor. Therefore,
  we assume the weights on the graph are coprime.
\end{remark}

\begin{definition}(The set of Markov chains $\epsilon$-conforming to a graph)\label{def:MCconforming}
  Given a strongly connected directed weighted graph $\mathcal{G}=\{V,\mathcal{E},W\}$
  with $n$ nodes and the stationary distribution ${\bm{\pi}}>0$, pick a
  minimum edge weight $\epsilon>0$, the set of Markov chains
  \emph{$\epsilon$-conforming} to $\mathcal{G}$ is defined by
  \begin{align*}
    \mathcal{P}_{\mathcal{G},\bm{\pi}}^\epsilon=\{P\in \real^{n\times n}\,|\,&p_{ij}\geq \epsilon \text{ if } (i,j)\in \mathcal{E},\\
    &p_{ij}=0 \text{ if } (i,j)\notin \mathcal{E},\\
    &P\mathbb{1}_n=\mathbb{1}_n,{\bm{\pi}}^\top P={\bm{\pi}}^\top\}.
  \end{align*}
\end{definition}

\begin{definition}(Return time entropy)
Given a set $\mathcal{P}_{\mathcal{G},\bm{\pi}}^\epsilon$, define the
\emph{return time entropy} function
$\Hrt:\mathcal{P}_{\mathcal{G},\bm{\pi}}^\epsilon
\mapsto \real_{\geq 0}$ by
\begin{equation}\label{eq:weightedentropy}
\Hrt(P)=\sum_{i=1}^n\pi_i\mathbb{H}(T_{ii}).
\end{equation}
\end{definition}

\begin{remark}(The expectation of the first return time)\label{rmk:expectedreturntime}
  For an irreducible Markov chain defined over a weighted graph with travel
  times, \cite[Theorem~6]{RP-PA-FB:14b} states
  \begin{equation}\label{eq:ETii}
    \mathbb{E}[T_{ii}]=\frac{\bm{\pi}^\top(P\circ W)\mathbb{1}_n}{\pi_i},
  \end{equation}
  where $\circ$ is the Hadamard element-wise product.  For unitary travel
  times, this formula reduces to the usual $\E[T_{ii}]=1/\pi_i$.  In both
  cases, the first return times expectations are inversely proportional to
  the entries of~$\bm{\pi}$.
\end{remark}



In general, it is difficult to obtain the closed-form expression for the
return time entropy function.

\begin{examples}(Two special cases with unitary travel times)\label{example:closedform}
  The elementary proofs of the following results are omitted in the
  interest of brevity.
  \begin{enumerate}
  \item (Two-node complete graph case) Given a two-node complete graph
    $\mathcal{G}$ with unit weights, if the transition matrix
    $P\in\mathcal{P}_{\mathcal{G},\bm{\pi}}^\epsilon$ has the following
    form
    \begin{equation*}
      P =\begin{bmatrix}
      p_{11}&p_{12}\\
      p_{21}&p_{22}
      \end{bmatrix},
    \end{equation*}
    then the return time entropy function is
    \begin{align*}
      \Hrt(P)&=-2\pi_1p_{11}\log(p_{11})-2\pi_2p_{22}\log(p_{22})\\
      &\quad-2\pi_1p_{12}\log(p_{12})-2\pi_2p_{21}\log(p_{21}).
    \end{align*}

  \item (Complete graph case with special structure) Given an $n\geq2$-node
    complete graph $\mathcal{G}$ with unit weights and the stationary distribution
    $\bm{\pi}=\frac{1}{n}\mathbb{1}_n$, if the transition matrix
    $P\in\mathcal{P}_{\mathcal{G},\bm{\pi}}^\epsilon$ has the form
    \begin{equation*}
      P=(a-b)I_n+b\mathbb{1}_n\mathbb{1}_n^\top,
    \end{equation*}
    for any $a\geq 0$ and $b>0$ satisfying $a+(n-1)b=1$, then the return time entropy function is
    \begin{align*}
      \Hrt(P)&=-a\log(a)-(n-1)b\log\big((n-1)b^2\big)\\
      &\quad -(n-1)(1-b)\log(1-b).
    \end{align*}
  \end{enumerate}
\end{examples}

In this paper, we are interested in the following problem.

\begin{problem}\label{prob:weightedentropy}
(Maximization of the return time entropy) Given a strongly connected directed weighted graph $\mathcal{G}=\{V,\mathcal{E},W\}$ and the stationary distribution $\bm{\pi}>0$, pick a minimum edge weight $\epsilon>0$, the maximization of the return time entropy is as follows.
\begin{align*}
& \text{maximize}
& &\Hrt(P)\\
& \text{subject to}
&& P\in\mathcal{P}_{\mathcal{G},\bm{\pi}}^\epsilon
\end{align*}
\end{problem}

\section{Properties of the return time entropy}\label{sec:properties}
\subsection{Dynamical model for hitting time probabilities}
In this subsection, we characterize a dynamical model for the first hitting
time probabilities and establish several important properties of the model.

\begin{theorem}(Linear dynamics for the first hitting time probabilities)\label{thm:linear-dynamics}
  Consider a transition matrix $P\in\mathbb{R}^{n\times n}$ that is
  nonnegative, row-stochastic and irreducible. Then
  \begin{enumerate}
  \item the hitting time probabilities $F_k$, $k\in\mathbb{Z}_{>0}$,
    satisfy the discrete-time delayed linear system with a finite number of
    impulse inputs:
    \begin{multline}
      \vecz(F_k)   =\vecz(P\circ \mathbf{1}_{\{k\mathbb{1}_n\mathbb{1}_n^\top=W\}}) \\
      +\sum_{i=1}^n\sum_{j=1}^np_{ij}(E_j\otimes\mathbb{e}_i\mathbb{e}_j^\top)\vecz(F_{k-w_{ij}}),\label{eq:dynamicswithtravel}
    \end{multline}
    where $E_{i}=[\mathbb{1}_n-\mathbb{e}_i]\in\mathbb{R}^{n\times n}$, and the
    initial conditions are $\vecz(F_{k})=\mathbb{0}_{n^2}$ for all $k\leq0$;

  \item if the weights are unitary, i.e., $w_{ij}\in\{0,1\}$, then the
    hitting time probabilities satisfy
    \begin{equation}\label{eq:RecursiveInVec}
      \vecz(F_k) =(I_n\otimes P)(I_{n^2} - [\vecz(I_n)])\vecz(F_{k-1}),
    \end{equation}
    where the initial condition is $F_1 = P$.
  \end{enumerate}
\end{theorem}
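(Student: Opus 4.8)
The plan is to derive a scalar first-step (one-step conditioning) recursion for each entry $F_k(i,j)=\mathbb{P}(T_{ij}=k)$ and then recast it in vectorized matrix form. Throughout I adopt the convention that $F_m$ is the $n\times n$ zero matrix for $m\leq 0$, which is consistent with the fact that every first hitting time satisfies $T_{ij}\geq 1$ (a first passage requires at least one hop and edge weights are at least $1$).

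First I would condition on the first transition $X_0=i\to X_1=l$, which occurs with probability $p_{il}$ and consumes $w_{il}$ units of travel time. There are two cases. If $l=j$, the walk has already reached the target after a single hop, contributing to $F_k(i,j)$ precisely when $k=w_{ij}$; this yields the term $p_{ij}\mathbf{1}_{\{w_{ij}=k\}}$. If $l\neq j$, then at accumulated travel time $w_{il}$ the walk sits at $l\neq j$ without having visited $j$, so by the Markov property the remaining portion is a fresh first passage from $l$ to $j$, which must equal $k-w_{il}$. Summing over the two cases gives the scalar recursion
\begin{equation*}
F_k(i,j)=p_{ij}\mathbf{1}_{\{w_{ij}=k\}}+\sum_{l\neq j}p_{il}\,F_{k-w_{il}}(l,j).
\end{equation*}
The only point requiring care here is the first-passage bookkeeping: one must check that conditioning on $l\neq j$ genuinely excludes any premature visit to $j$, so that the concatenated path remains a first passage; this holds since the walk occupies only $i$ and $l$ during the first hop.

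Next I would vectorize. The direct-hit term assembles into the matrix $P\circ\mathbf{1}_{\{k\mathbb{1}_n\mathbb{1}_n^\top=W\}}$, whose $(i,j)$ entry is exactly $p_{ij}\mathbf{1}_{\{w_{ij}=k\}}$, and where the non-edge pairs drop out because $p_{ij}=0$ (so no spurious self-reference $F_{k-0}=F_k$ appears and every surviving delay satisfies $w_{ij}\geq 1$, making the system causal). For the recursive term I would sum over first-step pairs, where in the target formula the summation index $j$ plays the role of the intermediate node $l$ above and the delay is $w_{ij}$. Using the identity $\vecz(AXB)=(B^\top\otimes A)\vecz(X)$ with $A=\mathbb{e}_i\mathbb{e}_j^\top$ and $B=E_j=E_j^\top$, the operator $E_j\otimes\mathbb{e}_i\mathbb{e}_j^\top$ applied to $\vecz(F_{k-w_{ij}})$ produces $\vecz(\mathbb{e}_i\mathbb{e}_j^\top F_{k-w_{ij}}E_j)$; here $\mathbb{e}_j^\top F_{k-w_{ij}}$ extracts the $j$-th row, right multiplication by $E_j$ zeros its $j$-th entry (this realizes the exclusion $l\neq j$), and left multiplication by $\mathbb{e}_i$ deposits the result into row $i$. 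Summing $p_{ij}$ times these contributions over all $(i,j)$ reproduces the scalar recursion entrywise and hence establishes \eqref{eq:dynamicswithtravel}, with the initial conditions inherited from the convention above.

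Finally, for part (ii) I would specialize to unitary weights, so every delay is $w_{il}=1$ and the recursion becomes $F_k(i,j)=p_{ij}\mathbf{1}_{\{k=1\}}+\sum_{l\neq j}p_{il}F_{k-1}(l,j)$. Evaluating at $k=1$ gives $F_1=P$, and for $k\geq 2$ the indicator drops out. Recognizing that $\sum_{l\neq j}p_{il}F_{k-1}(l,j)$ is the $(i,j)$ entry of $P\bigl(F_{k-1}-[F_{k-1}]\bigr)$, that deleting the diagonal $[F_{k-1}]$ corresponds to the mask $I_{n^2}-[\vecz(I_n)]$, and that left multiplication by $P$ corresponds to $I_n\otimes P$, yields \eqref{eq:RecursiveInVec}. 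I expect the main obstacle to be purely notational: correctly matching the Kronecker/$\vecz$ algebra — in particular the roles of $E_j$ and of the diagonal mask $[\vecz(I_n)]$ — to the entrywise exclusion $l\neq j$ in the probabilistic recursion.
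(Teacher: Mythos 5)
Your proposal is correct and follows essentially the same route as the paper: a first-step (renewal) decomposition yielding the scalar recursion $F_k(i,j)=p_{ij}\mathbf{1}_{\{k=w_{ij}\}}+\sum_{l\neq j}p_{il}F_{k-w_{il}}(l,j)$, followed by vectorization via Kronecker identities; the paper merely routes the bookkeeping through auxiliary matrices $D_k(i)$ and the identity $(I_{n^2}-[\vecz(I_n)])(I_n\otimes\mathbb{e}_j\mathbb{e}_j^\top)=E_j\otimes\mathbb{e}_j\mathbb{e}_j^\top$, whereas you apply $\vecz(AXB)=(B^\top\otimes A)\vecz(X)$ directly to each term $p_{ij}\mathbb{e}_i\mathbb{e}_j^\top F_{k-w_{ij}}E_j$. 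Your handling of the $w_{ij}=0$ non-edge terms and of the specialization to unitary weights in part (ii) matches the paper's argument.
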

\begin{proof}
By definition in \eqref{eq:passagetime}, $F_k(i,j)$ satisfies
the following recursive formula for $k\in \mathbb{Z}_{>0}$
\begin{equation}\label{eq:RecursiveOri}
    F_k(i,j) = p_{ij}\mathbf{1}_{\{k=w_{ij}\}}+\sum\limits_{h=1,h\neq j}^np_{ih}F_{k-w_{ih}}(h,j),
\end{equation}
where $\mathbf{1}_{\{\cdot\}}$ is the indicator function and $F_k(i,j)=0$ for all $k\leq0$ and $i,j\in V$.

Let $D_k(i)\in\mathbb{R}^{n\times n}$ be a matrix associated with node $i$ at time $k$ that has the form
\begin{equation*}
D_k(i)=\sum_{j\in\mathcal{N}_i}\mathbb{e}_j\mathbb{e}_j^\top F_{k-w_{ij}},
\end{equation*}
where $\mathcal{N}_i$ is the set of out-going neighbors of node $i$. Then, \eqref{eq:RecursiveOri} can be written in the following matrix form
\begin{equation}\label{eq:Fkmatrix}
F_k=P\circ \mathbf{1}_{\{k\mathbb{1}_n\mathbb{1}_n^\top=W\}} + \sum_{i=1}^n\mathbb{e}_i\mathbb{e}_i^\top P (D_k(i)-[D_k(i)]).
\end{equation}
Vectorizing both sides of \eqref{eq:Fkmatrix}, we have
\begin{multline*}
    \vecz(F_k)   =\vecz(P\circ \mathbf{1}_{\{k\mathbb{1}_n\mathbb{1}_n^\top=W\}}) \\
    +\sum_{i=1}^n(I_n\otimes\mathbb{e}_i\mathbb{e}_i^\top P)(I_{n^2}-[\vecz(I_n)])\vecz(D_k(i)).
\end{multline*}
Note that
\begin{equation*}
\vecz(D_k(i)) = \sum_{j\in\mathcal{N}_i}(I_n\otimes\mathbb{e}_j\mathbb{e}_j^\top)\vecz(F_{k-w_{ij}}),
\end{equation*}
and
\begin{equation*}
(I_{n^2}-[\vecz(I_n)])(I_n\otimes\mathbb{e}_j\mathbb{e}_j^\top)=E_j\otimes\mathbb{e}_j\mathbb{e}_j^\top.
\end{equation*}
Therefore, we have \eqref{eq:dynamicswithtravel}.

Moreover, if the travel times are unitary, then $F_1=P$ and
\begin{equation}\label{eq:sumindividual}
\sum_{i=1}^n\sum_{j=1}^np_{ij}(E_j\otimes\mathbb{e}_i\mathbb{e}_j^\top)=(I_n\otimes P)(I_{n^2} - [\vecz(I_n)]).
\end{equation}
Thus, equation \eqref{eq:RecursiveInVec} follows.
\end{proof}

The dynamical system \eqref{eq:dynamicswithtravel} can be transformed to an
equivalent homogeneous linear system by restarting the system at $k=w_{M}$
with same system matrices and appropriate initial conditions. Moreover, we
can augment the system and obtain a discrete-time linear system without
delays. This equivalent augmented system is useful for example in studying
stability properties. For $k\geq1$, we have
\begin{equation}\label{eq:augmentedsystem}
\begin{bmatrix}
\vecz(F_{k+w_{\max}})\\
\vecz(F_{k+w_{\max}-1})\\
\vdots\\
\vecz(F_{k+1})\\
\end{bmatrix}=\Psi\begin{bmatrix}
\vecz(F_{k+w_{\max}-1})\\
\vecz(F_{k+w_{\max}-2})\\
\vdots\\
\vecz(F_{k})
\end{bmatrix},
\end{equation}
where
\begin{equation}\label{eq:sysmatrix}
\Psi=\begin{bmatrix}
\Phi_1&\Phi_2&\cdots&\cdots&\Phi_{w_{\max}}\\
I_{n^2}&\mathbb{0}_{n^2\times n^2}&\cdots&\cdots&\mathbb{0}_{n^2\times n^2}\\
\mathbb{0}_{n^2\times n^2}&I_{n^2}&\cdots&\cdots&\mathbb{0}_{n^2\times n^2}\\
\vdots&\vdots&\ddots&\cdots&\mathbb{0}_{n^2\times n^2}\\
\mathbb{0}_{n^2\times n^2}&\cdots&\cdots&I_{n^2}&\mathbb{0}_{n^2\times n^2}\\
\end{bmatrix},
\end{equation}
and for $h\in[1,w_{\max}]$,
\begin{equation}\label{eq:phimatrix}
\Phi_h=\sum_{i=1}^n\sum_{j=1}^np_{ij}(E_j\otimes\mathbb{e}_i\mathbb{e}_j^\top)\mathbf{1}_{\{w_{ij}=h\}}.
\end{equation}
The initial conditions for \eqref{eq:augmentedsystem} can be computed using \eqref{eq:dynamicswithtravel}. For brevity, we denote $
\begin{bmatrix}
\vecz(F_{k+w_{\max}-1})&
\cdots&
\vecz(F_{k})
\end{bmatrix}^\top$ by $\vecz(\tilde{F}_k)^\top$.


\begin{lemma}(Properties of the linear dynamics for the first hitting time probabilities) \label{lemma:systempropertiestravel}
If $P\in \real^{n\times n}$ is nonnegative, row-stochastic and
irreducible, then
\begin{enumerate}
\item the matrix $(I_n\otimes P)(I_{n^2} - [\vecz(I_n)])$ is row-substochastic with $\rho\big((I_n\otimes P)(I_{n^2} - [\vecz(I_n)])\big)<1$.
\item the delayed discrete-time linear system with a finite number of impulse inputs \eqref{eq:dynamicswithtravel} is asymptotically stable;
\item $\vecz(F_k)\geq 0$ for $k\in \mathbb{Z}_{>0}$ and $\sum_{k=1}^{\infty}\vecz(F_k) = \mathbb{1}_{n^2\times 1}$.
\end{enumerate}
\end{lemma}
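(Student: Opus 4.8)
The plan is to establish the three claims in the stated order, since (ii) will use (i) and (iii) will use both. Throughout, abbreviate the unitary system matrix as $M:=(I_n\otimes P)(I_{n^2}-[\vecz(I_n)])$.

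For claim (i), the first step is to recognize the block structure of $M$. Unwinding the Kronecker algebra gives $M\vecz(X)=\vecz\big(P(X-[X])\big)$ for any $X$, so in the column-major ordering the $j$-th column $f^{(j)}$ of $F_k$ evolves autonomously as $f_k^{(j)}=Q^{(j)}f_{k-1}^{(j)}$, where $Q^{(j)}$ is $P$ with its $j$-th column set to zero; hence $M=\operatorname{blockdiag}(Q^{(1)},\dots,Q^{(n)})$ and $\rho(M)=\max_j\rho(Q^{(j)})$. Nonnegativity of $M$ is clear, and a direct computation using $P\mathbb{1}_n=\mathbb{1}_n$ gives $M\mathbb{1}_{n^2}=\mathbb{1}_{n^2}-\vecz(P)\le\mathbb{1}_{n^2}$, with strict inequality in the rows where $P$ has positive entries; thus $M$ is properly row-substochastic. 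For the strict spectral bound I would permute the index $j$ to the last coordinate, exhibiting $Q^{(j)}$ as block lower triangular with diagonal blocks $\hat Q^{(j)}$ and $0$, where $\hat Q^{(j)}$ is the principal submatrix of $P$ obtained by deleting row and column $j$; therefore $\rho(Q^{(j)})=\rho(\hat Q^{(j)})$. Since $P$ is irreducible, nonnegative and $\rho(P)=1$, the Perron--Frobenius theorem ensures every proper principal submatrix has strictly smaller spectral radius, so $\rho(\hat Q^{(j)})<1$ for all $j$ and hence $\rho(M)<1$.

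For claim (ii), note that for $k>w_{\max}$ the impulse terms in \eqref{eq:dynamicswithtravel} vanish, so the dynamics coincide with the homogeneous delay system realized by the block-companion matrix $\Psi$ of \eqref{eq:sysmatrix}--\eqref{eq:phimatrix}; asymptotic stability is then equivalent to $\rho(\Psi)<1$. I would pass to the characteristic equation: every nonzero eigenvalue $\lambda$ of $\Psi$ satisfies $\det\big(I_{n^2}-\Phi(\lambda)\big)=0$, where $\Phi(\lambda):=\sum_{i,j}p_{ij}\lambda^{-w_{ij}}(E_j\otimes\mathbb{e}_i\mathbb{e}_j^\top)$. The key point is that $\Phi(1)=M$ by identity \eqref{eq:sumindividual}, and that the block pattern places each monomial $p_{ij}\lambda^{-w_{ij}}$ in a \emph{distinct} entry, so for $|\lambda|\ge1$ one has the entrywise majorization $|\Phi(\lambda)|\le\Phi(1)=M$ (because $|\lambda|^{-w_{ij}}\le1$). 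The comparison theorem for nonnegative matrices then gives $\rho(\Phi(\lambda))\le\rho(M)<1$, so $1$ is not an eigenvalue of $\Phi(\lambda)$ and $\det(I_{n^2}-\Phi(\lambda))\ne0$. Consequently $\Psi$ has no eigenvalue of modulus at least $1$, i.e. $\rho(\Psi)<1$, and the system is asymptotically stable.

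For claim (iii), nonnegativity of $\vecz(F_k)$ follows by induction on $k$ from \eqref{eq:RecursiveOri}, all of whose coefficients are nonnegative. For the total mass, asymptotic stability from (ii) together with Lemma \ref{lemma:SolutionBound} applied to the augmented system \eqref{eq:augmentedsystem} yields a geometric bound $\|\vecz(F_k)\|\le c\lambda^k$ with $\lambda<1$, so $\sum_{k\ge1}\vecz(F_k)$ converges absolutely. Summing \eqref{eq:RecursiveOri} over $k$ (legitimate by absolute convergence, with $F_{k'}=0$ for $k'\le0$) shows that for each target $j$ the column sums $s^{(j)}=\sum_k f_k^{(j)}$ satisfy $(I_n-Q^{(j)})s^{(j)}=p^{(j)}$, with $p^{(j)}$ the $j$-th column of $P$. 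Since $\rho(Q^{(j)})<1$ by (i), $I_n-Q^{(j)}$ is invertible and the solution is unique; and $s^{(j)}=\mathbb{1}_n$ solves it because $p_{ij}+\sum_{h\ne j}p_{ih}=1$. Hence $\sum_{k\ge1}\vecz(F_k)=\mathbb{1}_{n^2}$.

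I expect the genuine difficulty to be the \emph{strict} bounds $\rho(M)<1$ and $\rho(\Psi)<1$, as substochasticity alone only gives $\le1$. For $M$ this is resolved by the Perron--Frobenius strict monotonicity of the spectral radius under deletion of an index of an irreducible matrix. The delay in claim (ii) is the new obstacle, and the cleanest resolution I foresee is the entrywise majorization $|\Phi(\lambda)|\le M$ on $\{|\lambda|\ge1\}$, which reduces the travel-time system back to the unitary bound of (i); an alternative is to split every edge of weight $w_{ij}$ into $w_{ij}$ unit edges through auxiliary nodes and apply (i) to the enlarged unit-weight chain, but relating the two spectra is more cumbersome than the generating-function comparison.
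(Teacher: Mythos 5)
Your proof is correct and follows the same skeleton as the paper's: block-diagonalize $(I_n\otimes P)(I_{n^2}-[\vecz(I_n)])$ into the matrices $PE_j$ (your $Q^{(j)}$), reduce stability of the delayed system to $\rho\big(\sum_{h}\Phi_h\big)<1$, and obtain the total mass by summing the recursion and inverting $I-M$. The difference is that the paper disposes of the two key spectral facts by citation -- $\rho(PE_j)<1$ via \cite[Lemma~2.2]{RP-AC-FB:14k}, and the equivalence of asymptotic stability of the delayed positive system with $\rho\big(\sum_h \Phi_h\big)<1$ via \cite[Theorem 1]{AH-ET:98} -- whereas you prove both from scratch: the first by the Perron--Frobenius strict monotonicity of the spectral radius under deletion of a row and column of an irreducible matrix (using that $Q^{(j)}$ is similar, after a permutation, to a block-triangular matrix with diagonal blocks $\hat Q^{(j)}$ and $0$), and the second by the characteristic equation $\det(I_{n^2}-\Phi(\lambda))=0$ of the block-companion matrix $\Psi$ together with the entrywise majorization $|\Phi(\lambda)|\leq \Phi(1)=M$ on $\{|\lambda|\geq 1\}$, which is legitimate because the terms $p_{ij}\lambda^{-w_{ij}}(E_j\otimes\mathbb{e}_i\mathbb{e}_j^\top)$ have disjoint supports. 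Both of your replacement arguments are sound; what they buy is a self-contained proof (and, in the case of (ii), an explicit explanation of \emph{why} positivity of the $\Phi_h$ lets one collapse the delays), at the cost of some length. Your observation that $M\mathbb{1}_{n^2}=\mathbb{1}_{n^2}-\vecz(P)$ is also a cleaner justification of substochasticity and of why $\mathbb{1}_{n^2}$ solves the fixed-point equation in (iii) than the paper's phrasing.
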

\begin{proof}
Regarding (i), note that the matrix $(I_n\otimes P)(I_{n^2} - [\vecz(I_n)])$ is block diagonal with
the $i$-th block being $PE_{i}$. Since $P$ is
irreducible, there is at least one positive entry in each column of
$P$. Therefore $PE_{i}$'s are
row-substochastic and so is $(I_n\otimes P)(I_{n^2} - [\vecz(I_n)])$. By \cite[Lemma~2.2]{RP-AC-FB:14k}, $\rho(PE_{i})<1$ for all $i\in\until{n}$ and $\rho((I_n\otimes P)(I_{n^2} - [\vecz(I_n)]))=\max_{i}\rho(PE_{i})<1$.

Regarding (ii), since we can rewrite \eqref{eq:dynamicswithtravel} as \eqref{eq:augmentedsystem} with appropriate initial conditions and $\Phi_i$'s are nonnegative, by the stability criterion for delayed linear systems \cite[Theorem 1]{AH-ET:98}, \eqref{eq:dynamicswithtravel} is asymptotically stable if
\[
\rho\Big(\sum_{i=1}^{w_{\max}}\Phi_i\Big)=\rho\big((I_n\otimes P)(I_{n^2} - [\vecz(I_n)])\big)<1,
\]
which is true by (i).

Regarding (iii), first note that all the system matrices are nonnegative, thus $\vecz(F_k)\geq 0$ for all $k\in \mathbb{Z}_{>0}$. Moreover, due to (ii), the delayed linear system \eqref{eq:dynamicswithtravel} is asymptotically stable. Summing both sides of \eqref{eq:dynamicswithtravel} over $k$, we have
\begin{small}
\begin{align*}
\sum_{k=1}^{\infty}\vecz(F_k)&=\vecz(P) + \sum_{i=1}^n\sum_{j=1}^np_{ij}(E_j\otimes\mathbb{e}_i\mathbb{e}_j^\top)\sum_{k=1}^{\infty}\vecz(F_k)\\
&=\vecz(P) + (I_n\otimes P)(I_{n^2} - [\vecz(I_n)])\sum_{k=1}^{\infty}\vecz(F_k),
\end{align*}
\end{small}
which implies that $\sum_{k=1}^{\infty}\vecz(F_k) = \mathbb{1}_{n^2\times 1}$.
\end{proof}

\subsection{Well-posedness of the optimization problem}
We here show that the function $\Hrt$ is continuous over the compact set
$\mathcal{P}_{\mathcal{G},\bm{\pi}}^\epsilon$. Then, by the extreme value
theorem, $\Hrt$ has a (possibly non-unique) maximum point in the set and
thus Problem \ref{prob:weightedentropy} is well-posed.


\begin{lemma}\label{lemma:continuity}
(Continuity of the return time entropy function) Given the compact set
  $\mathcal{P}_{\mathcal{G},\bm{\pi}}^\epsilon$, the following statements
  hold:
  \begin{enumerate}
  \item there exist constants $\lambda_{\max}\in(0,1)$ and $c>0$ such that
    \begin{equation}\nonumber
      F_k(i,i)\leq c\lambda_{\max}^k, \quad\text{for all }
      k\in\mathbb{Z}_{>0}, i\in\until{n};
    \end{equation}

  \item the return time entropy functions
    $\mathbb{H}(T_{ii})$, $i\in\until{n}$, and $\Hrt(P)$ are continuous on
    the compact set $\mathcal{P}_{\mathcal{G},\bm{\pi}}^\epsilon$; and 

  \item Problem~\ref{prob:weightedentropy} is well-posed in
    the sense that a global optimum exists.
\end{enumerate}
\end{lemma}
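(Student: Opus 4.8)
The plan is to establish the three items in order, with item (i) supplying a uniform geometric bound on the return probabilities that powers everything else. For (i), I would pass to the augmented delay-free realization \eqref{eq:augmentedsystem}--\eqref{eq:phimatrix} with companion matrix $\Psi=\Psi(P)$, so that $\vecz(\tilde F_k)=\Psi^{k-1}\vecz(\tilde F_1)$. The entries of $\Psi(P)$ are affine in those of $P$, hence $P\mapsto\Psi(P)$ is continuous and the image $\mathcal{A}:=\{\Psi(P):P\in\mathcal{P}_{\mathcal{G},\bm{\pi}}^\epsilon\}$ is compact. Lemma~\ref{lemma:systempropertiestravel}(ii) realizes the delayed system by this companion form, so $\rho(\Psi(P))<1$ for every admissible $P$; since $\rho$ is continuous on the compact $\mathcal{A}$, we obtain $\rho_{\mathcal{A}}=\max_{A\in\mathcal{A}}\rho(A)<1$. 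Lemma~\ref{lemma:SolutionBound} then furnishes $\lambda_{\max}\in(\rho_{\mathcal{A}},1)$ and $c>0$ with $\|\Psi^k\|\leq c\lambda_{\max}^k$ uniformly on $\mathcal{A}$. Because every entry of each $F_j$ is a probability in $[0,1]$, the initial block $\vecz(\tilde F_1)$ is uniformly bounded, and $F_k(i,i)$ is a coordinate of $\vecz(\tilde F_k)$; absorbing the bounded initial data, the index shift, and norm-equivalence constants into $c$ yields $F_k(i,i)\leq c\lambda_{\max}^k$ uniformly in $P$, $i$, and $k$.

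For (ii), write $\mathbb{H}(T_{ii})=\sum_{k\geq1}f_k(P)$ with $f_k(P)=-F_k(i,i)\log F_k(i,i)$. Each $f_k$ is continuous, since $F_k(i,i)$ is a polynomial in the entries of $P$ via \eqref{eq:RecursiveOri} and $g(x):=-x\log x$ is continuous on $[0,1]$ under the convention $0\log0=0$. The decisive step is to produce a summable majorant $M_k$ independent of $P$, so that the Weierstrass M-test (Lemma~\ref{lemma:Weierstrass}) applies. I would exploit that $g$ is increasing on $[0,1/e]$: since $c\lambda_{\max}^k\to0$, there is $K$ with $c\lambda_{\max}^k\leq 1/e$ for all $k\geq K$, and for such $k$ the bound from (i) gives $f_k(P)\leq g(c\lambda_{\max}^k)=c\lambda_{\max}^k\big(-\log c+k|\log\lambda_{\max}|\big)=:M_k$, which is $O(k\lambda_{\max}^k)$ and therefore summable; for the finitely many $k<K$ I take $M_k=\max_{x\in[0,1]}g(x)=1/e$. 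As $\sum_kM_k<\infty$ and the $M_k$ do not depend on $P$, the series converges uniformly, so $\mathbb{H}(T_{ii})$ is continuous, and $\Hrt=\sum_i\pi_i\mathbb{H}(T_{ii})$ is a fixed finite combination of continuous functions, hence continuous.

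Finally, (iii) is immediate: $\Hrt$ is continuous on the compact set $\mathcal{P}_{\mathcal{G},\bm{\pi}}^\epsilon$, so by the extreme value theorem it attains a global maximum and Problem~\ref{prob:weightedentropy} is well-posed. The main obstacle is the uniform majorant in (ii). A direct bound on $-F_k(i,i)\log F_k(i,i)$ is deceptive because $g$ is non-monotone near its peak at $1/e$; the argument only closes once one combines the uniform geometric decay of (i) with the observation that the iterates eventually land in the increasing branch of $g$. This is precisely why (i) must deliver constants $c,\lambda_{\max}$ that are uniform over the entire admissible set, rather than a pointwise, $P$-dependent estimate.
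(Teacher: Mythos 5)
Your proposal is correct and follows essentially the same route as the paper: item (i) via the augmented companion system, compactness, continuity of the spectral radius, and the uniform bound of Lemma~\ref{lemma:SolutionBound}; item (ii) via the Weierstrass M-test with the majorant built from the monotonicity of $-x\log x$ on $[0,e^{-1}]$ past a threshold $K$ and the constant $e^{-1}$ before it; item (iii) via the extreme value theorem. The only cosmetic difference is that the paper handles the initial indices $k\leq w_{\max}$ by explicitly enlarging $c$ so that $c\lambda_{\max}^{w_{\max}}\geq 1$, which you subsume into "absorbing constants."
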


\begin{proof}
Regarding (i), for $k\geq w_{M}+1$, since the spectral radius $\rho(\Psi)$ is a continuous function of $\Psi$ \cite[Example 7.1.3]{CDM:01}, where $\Psi$ is given in \eqref{eq:sysmatrix}, and $\Psi$ is a continuous function of $P$, $\rho(\Psi)$ is a continuous function of $P$. Hence, by Lemma~\ref{lemma:systempropertiestravel}(ii) and the extreme value theorem, there exists a $\rho_{\max}<1$ such that
\begin{equation*}
\rho_{\max} = \max\limits_{P \in\mathcal{P}_{\mathcal{G},\bm{\pi}}^\epsilon} \rho(\Psi)<1.
\end{equation*}
Therefore, for $k\geq w_{M}+1$ and $i\in\until{n}$, by Lemma~\ref{lemma:SolutionBound}, there exist $c_1>0$ and $\rho_{\max}<\lambda_{\max}<1$ such that
\begin{align*}
F_k(i,i)&\leq\|\vecz{(\tilde{F}_{k-w_{\max}+1})}\|_\infty\\
&=\|(\Psi)^{k-w_{\max}}\vecz(\tilde{F}_{1})\|_\infty\\
&\leq\|(\Psi)^{k-w_{\max}}\|_\infty\|\vecz(\tilde{F}_{1})\|_\infty\\
&\leq c_1\lambda_{\max}^{k-w_{\max}}=\frac{c_1}{\lambda_{\max}^{w_{\max}}}\lambda_{\max}^{k}.
\end{align*}
Let $c=\max\{\frac{c_1}{\lambda_{\max}^{w_{\max}}},\frac{1}{\lambda_{\max}^{w_{\max}}}\}$, then we have for $k\geq w_{M}+1$,
\begin{align*}
F_k(i,i)\leq\frac{c_1}{\lambda_{\max}^{w_{\max}}}\lambda_{\max}^{k}<c\lambda_{\max}^{k}.
\end{align*}
For $k\leq w_{M}$,
\begin{align*}
c\lambda_{\max}^k\geq c\lambda_{\max}^{w_{\max}}\geq 1\geq F_{k}(i,i).
\end{align*}
Therefore, we have (i).

Regarding (ii), due to (i), there exists a positive integer $K$ that does
not depend on the elements of $\mathcal{P}_{\mathcal{G},\bm{\pi}}^\epsilon$
such that when $k\geq K$, $c\lambda_{\max}^k\leq e^{-1}$. Since $x\mapsto
-x\log x$ is an increasing function for $x\in[0,e^{-1}]$, when $k\geq K$,
\begin{equation*}
-F_k(i,i)\log F_k(i,i)\leq-c\lambda_{\max}^k\log (c\lambda_{\max}^k)\coloneqq M_k.
\end{equation*}
For $k< K$, $-F_k(i,i)\log F_k(i,i)\leq e^{-1}\coloneqq M_k$. Then
\begin{small}
\begin{equation*}
\sum_{k=1}^{K-1} M_k =\frac{K-1}{e},
\end{equation*}
\end{small}
and
\begin{small}
\begin{align}\label{eq:tailbound}
\sum_{k=K}^\infty M_k&=-\sum_{k=K}^\infty c\lambda_{\max}^k\log (c\lambda_{\max}^k)\nonumber\\
&=-c\log c\sum_{k=K}^\infty \lambda_{\max}^k-c\log (\lambda_{\max})\sum_{k=K}^\infty k\lambda_{\max}^k\nonumber\\
&=-c\Big(\frac{\lambda_{\max}^K}{1-\lambda_{\max}}\log (c\lambda_{\max}^K) \nonumber\\
&\qquad\quad+\frac{\lambda_{\max}^{K+1}}{(1-\lambda_{\max})^2}\log (\lambda_{\max})\Big).
\end{align}
\end{small}
Hence,
\begin{align*}
\sum_{k=1}^\infty M_k&=\sum_{k=1}^{K-1} M_k+\sum_{k=K}^\infty M_k<\infty,
\end{align*}
which holds for any $i$ and any transition matrix in the compact set $\mathcal{P}_{\mathcal{G},\bm{\pi}}^\epsilon$. By Lemma~\ref{lemma:Weierstrass}, the series $-\sum_{k=1}^\infty F_k(i,i)\log F_k(i,i)$ converges uniformly. Since the the limit of a
uniformly convergent series of continuous function is continuous
\cite[Theorem 7.12]{WR:76}, $\mathbb{H}(T_{ii})$ is a continuous function on $\mathcal{P}_{\mathcal{G},\bm{\pi}}^\epsilon$. Finally, $\Hrt(P)$ is a finite weighted sum of continuous functions $\mathbb{H}(T_{ii})$, thus $\Hrt(P)$ is a continuous function.

Regarding (iii), because $\Hrt$ is a continuous function over the compact set
$\mathcal{P}_{\mathcal{G},\bm{\pi}}^\epsilon$, the extreme value theorem
ensures that Problem \ref{prob:weightedentropy} admits a global optimum
solution (possibly non-unique) and is therefore well-posed.
\end{proof}

\subsection{Optimal solution for complete graphs with unitary travel times}
We here provide (1) an upper bound for the return time entropy with unitary
travel times based on the principle of maximum entropy and (2) the optimal
solution to Problem~\ref{prob:weightedentropy} for the complete graph case
with unitary travel times.

\begin{lemma}(Maximum achieved return time entropy in a complete graph with unitary weights)
Given a strongly connected graph $\mathcal{G}$ with unitary weights and the compact set $\mathcal{P}_{\mathcal{G},\bm{\pi}}^\epsilon$,
\begin{enumerate}
\item the return time entropy function is upper bounded by
\begin{align*}
\Hrt(P)\leq-\sum_{i=1}^n(\pi_i \log {\pi_i} + (1-{\pi_i})\log(1-{\pi_i}));
\end{align*}
\item when the graph $\mathcal{G}$ is complete, the upper
  bound is achieved and the transition matrix that maximizes the
  return time entropy $\Hrt(P)$ is given by
  $P=\vectorones[n]\bm{\pi}^\top$.
\end{enumerate}
\end{lemma}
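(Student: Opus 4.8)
The plan is to treat each return time $T_{ii}$ as a $\mathbb{Z}_{>0}$-valued random variable whose \emph{mean is pinned down} by the data of the problem, and then to invoke the maximum-entropy principle. Concretely, in the unitary-travel-time case Remark~\ref{rmk:expectedreturntime} gives $\mathbb{E}[T_{ii}] = 1/\pi_i =: \mu_i$, and since $\pi_i \in (0,1]$ we have $\mu_i \geq 1$. Lemma~\ref{lemma:MaximumEntropy} then says that among all distributions on $\mathbb{Z}_{>0}$ with this fixed mean the entropy is maximized by the geometric distribution, so
\begin{equation*}
\mathbb{H}(T_{ii}) \leq \mu_i \log \mu_i - (\mu_i - 1)\log(\mu_i - 1).
\end{equation*}
This is the only nontrivial ingredient of part (i); the remainder is bookkeeping.

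Next I would substitute $\mu_i = 1/\pi_i$ into the right-hand side and simplify. A short computation collapses the expression to $-\log\pi_i - \tfrac{1-\pi_i}{\pi_i}\log(1-\pi_i)$, so that multiplying through by $\pi_i$ gives
\begin{equation*}
\pi_i \mathbb{H}(T_{ii}) \leq -\pi_i\log\pi_i - (1-\pi_i)\log(1-\pi_i).
\end{equation*}
Summing over $i \in \until{n}$ and recalling $\Hrt(P) = \sum_{i=1}^n \pi_i \mathbb{H}(T_{ii})$ yields the claimed upper bound, valid for every $P \in \mathcal{P}_{\mathcal{G},\bm{\pi}}^\epsilon$ since it used only the mean constraint and not the graph structure beyond unitary weights.

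For part (ii) I would exhibit a feasible chain meeting the bound with equality term by term. Take $P = \mathbb{1}_n\bm{\pi}^\top$: it is nonnegative, satisfies $P\mathbb{1}_n = \mathbb{1}_n$ and $\bm{\pi}^\top P = \bm{\pi}^\top$, and lies in $\mathcal{P}_{\mathcal{G},\bm{\pi}}^\epsilon$ for the complete graph provided $\epsilon \leq \min_i \pi_i$ (so that every entry $p_{ij} = \pi_j$ respects the lower bound). Under this $P$ the successive states are i.i.d.\ with law $\bm{\pi}$ regardless of the current node, so starting from $i$ the first return obeys $\mathbb{P}(T_{ii}=k) = (1-\pi_i)^{k-1}\pi_i$; that is, $T_{ii}$ is geometric with parameter $\pi_i$ and mean $1/\pi_i$. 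This is exactly the entropy maximizer of Lemma~\ref{lemma:MaximumEntropy}, so each inequality above becomes an equality and $\Hrt(\mathbb{1}_n\bm{\pi}^\top)$ equals the upper bound. Since the bound holds over the whole feasible set, $P = \mathbb{1}_n\bm{\pi}^\top$ is a global maximizer.

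The argument is short, so I do not expect a deep obstacle; the care points are (a) the algebraic reduction of $\mu\log\mu - (\mu-1)\log(\mu-1)$ under $\mu = 1/\pi_i$, which must land exactly on the stated symmetric form, and (b) the feasibility caveat $\epsilon \leq \min_i \pi_i$, without which the candidate leaves the set. If one additionally wants \emph{uniqueness} of the maximizer (as flagged in the abstract), the extra work is to use that the geometric law is the \emph{unique} maximizer in Lemma~\ref{lemma:MaximumEntropy}, forcing $T_{ii}$ to be geometric for every $i$, and then to argue that on a complete graph only $P = \mathbb{1}_n\bm{\pi}^\top$ produces geometric return times simultaneously at all nodes. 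I expect this last implication to be the genuinely delicate step and would treat it separately from the achievability claim above.
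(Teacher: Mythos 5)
Your proposal is correct and follows essentially the same route as the paper: bound each $\mathbb{H}(T_{ii})$ via the fixed mean $\mathbb{E}[T_{ii}]=1/\pi_i$ and the geometric maximum-entropy lemma, then verify that $P=\mathbb{1}_n\bm{\pi}^\top$ makes every $T_{ii}$ geometric on the complete graph. Your added feasibility caveat $\epsilon\leq\min_i\pi_i$ is a minor point the paper leaves implicit, but the substance of the argument is the same.
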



\begin{proof}
Regarding (i), by Remark \ref{rmk:expectedreturntime}, in the case of unitary travel times, we have
$\E[T_{ii}]=1/\pi_i$. Thus, $T_{ii}$ is a discrete random variable
with fixed expectation, whose entropy is bounded as shown in
Lemma~\ref{lemma:MaximumEntropy}. For any transition matrix $P\in\mathcal{P}_{\mathcal{G},\bm{\pi}}^\epsilon$, the return time entropy function $\Hrt(P)$ satisfies
\begin{small}
\begin{align*}
\Hrt(P)&=\sum_{i=1}^n\pi_i\mathbb{H}(T_{ii})\leq\sum_{i=1}^n\pi_i\max_{T_{ii}}\{\mathbb{H}(T_{ii})\} \\
&=\sum_{i=1}^n\pi_i\big(\frac{1}{\pi_i} \log \frac{1}{\pi_i} -(\frac{1}{\pi_i}-1)\log(\frac{1}{\pi_i}-1)\big)\\
&=-\sum_{i=1}^n\big(\pi_i \log {\pi_i} + (1-{\pi_i})\log(1-{\pi_i})\big),
\end{align*}
\end{small}
where the third line uses \eqref{eq:maximumentropy}.

Regarding (ii), when the graph is complete and $P=\vectorones[n]\bm{\pi}^\top$, the return time $T_{ii}$ follows the geometric distribution:
\begin{equation}\nonumber
\prob(T_{ii}=k)=\pi_i(1-\pi_i)^{k-1}.
\end{equation}
Then by Lemma~\ref{lemma:MaximumEntropy}, we obtain the results.
\end{proof}

\subsection{Relations with the entropy rate of Markov chains}
Given an irreducible Markov chain $P$ with $n$ nodes and stationary
distribution $\bm{\pi}$, the \emph{entropy rate of $P$} is given by
\begin{equation*}
  \Hrate(P)=-\sum_{i=1}^n\pi_i\sum_{j=1}^np_{ij}\log p_{ij}.
\end{equation*}
We next study the relationship between the return time entropy $\Hrt$ with
unitary travel times and the entropy rate $\Hrate$.

\begin{theorem}\label{thm:relations}
(Relations between the return time entropy with unitary travel times and the
  entropy rate) For all $P$ in the compact set
  $\mathcal{P}_{\mathcal{G},\bm{\pi}}^\epsilon$ where $\mathcal{G}$ has unitary travel times, the return time entropy
  $\Hrt(P)$ and the entropy rate $\Hrate(P)$ satisfy
  \begin{equation}\label{eq:relations}
    \Hrate(P)\leq \Hrt(P) \leq n\Hrate(P),
  \end{equation}
  where $n$ is the number of nodes in the graph $\mathcal{G}$.
\end{theorem}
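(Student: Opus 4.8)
The plan is to relate both quantities to the entropy of the \emph{first return trajectory} $\gamma_{ii}$, i.e.\ the random path $(X_1,\dots,X_{T_{ii}})$ traced out by the walk from node $i$ until its first return, of which $T_{ii}$ is merely the integer length. The cornerstone is the identity
\begin{equation*}
\pi_i\,\mathbb{H}(\gamma_{ii})=\Hrate(P),\qquad i\in\until{n},
\end{equation*}
which is the Ekroot--Cover equivalence between return-trajectory entropy and entropy rate \cite{LE-TMC:93}. I would establish it by a renewal/chain-rule computation: by the chain rule the trajectory entropy equals the expected sum, accumulated along the excursion, of the one-step conditional entropies $H(p_{a\cdot})=-\sum_b p_{ab}\log p_{ab}$; and the expected number of transitions made out of each node $a$ during one excursion from $i$ equals $\pi_a/\pi_i$ (the standard expected-visit-count formula, whose total $\sum_a \pi_a/\pi_i=1/\pi_i$ is consistent with $\E[T_{ii}]=1/\pi_i$ from Remark~\ref{rmk:expectedreturntime}). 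Summing gives $\mathbb{H}(\gamma_{ii})=\sum_a(\pi_a/\pi_i)H(p_{a\cdot})=\Hrate(P)/\pi_i$.

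For the \emph{upper bound} I would invoke the data-processing inequality: since $T_{ii}$ is a deterministic function of $\gamma_{ii}$, its entropy cannot exceed that of the trajectory, so $\mathbb{H}(T_{ii})\le\mathbb{H}(\gamma_{ii})=\Hrate(P)/\pi_i$. Weighting by $\pi_i$ and summing over $i$ yields $\Hrt(P)=\sum_i\pi_i\mathbb{H}(T_{ii})\le\sum_i\pi_i\Hrate(P)/\pi_i=n\Hrate(P)$, which is the right-hand inequality in \eqref{eq:relations}.

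The \emph{lower bound} is the main obstacle, because it is genuinely global: the node-wise inequality $\mathbb{H}(T_{ii})\ge H(p_{i\cdot})$ is false (e.g.\ at a node all of whose out-neighbours return in one step, $T_{ii}\equiv 2$ has zero entropy while $H(p_{i\cdot})>0$), so the first-step entropy at one node must be recovered from the return times of \emph{other} nodes. I would argue as follows. Run the stationary chain over a horizon $N$ and, for each state $j$, form the binary visit-indicator process $\mathbf{1}\{X_t=j\}$. Knowing all $n$ of these processes is equivalent to knowing the sample path $X_0^N$, so subadditivity of joint entropy gives $\mathbb{H}(X_0^N)\le\sum_{j}\mathbb{H}(\text{visit process of }j)$. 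By stationarity and the Markov property $\tfrac1N\mathbb{H}(X_0^N)\to\Hrate(P)$, while each visit process is a renewal process whose i.i.d.\ inter-arrival times are distributed as $T_{jj}$, so its entropy rate is $\mathbb{H}(T_{jj})/\E[T_{jj}]=\pi_j\mathbb{H}(T_{jj})$. Dividing by $N$ and letting $N\to\infty$ gives $\Hrate(P)\le\sum_j\pi_j\mathbb{H}(T_{jj})=\Hrt(P)$.

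I expect the delicate step to be precisely this lower bound: justifying that the entropy rate of the visit-indicator renewal process equals $\pi_j\mathbb{H}(T_{jj})$ and controlling the lower-order boundary contributions (the first hitting time of $j$ and the terminal residual excursion) as $N\to\infty$. The subadditivity inequality and the equivalence between the path and the family of visit processes are then routine, so the bulk of the technical work concentrates on the renewal-entropy-rate computation.
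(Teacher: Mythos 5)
Your proposal is correct, and while your upper bound follows the paper's route, your lower bound is a genuinely different argument. For the upper bound the paper likewise invokes the Ekroot--Cover identity $\mathbb{H}(L_{ii})=\Hrate(P)/\pi_i$ and then observes that $T_{ii}$ is obtained from the return trajectory $L_{ii}$ by lumping paths of equal length, so $\mathbb{H}(T_{ii})\le\mathbb{H}(L_{ii})$ via $(x+y)\log(x+y)\ge x\log x+y\log y$ --- this is exactly your data-processing step written out by hand, and your sketch of the identity via expected visit counts $\pi_a/\pi_i$ is a legitimate derivation of what the paper simply cites. For the lower bound, however, the paper works with the full matrix of first-hitting-time entropies $\mathbb{H}(T_{ij})$: concavity of $x\mapsto -x\log x$ yields the entrywise recursive inequality $\mathbb{H}(T)\ge -P\circ\log P+P\,\mathbb{H}(T)-P[\mathbb{H}(T)]$, and multiplying by $\bm{\pi}^\top$ on the left and $\mathbb{1}_n$ on the right cancels the off-diagonal terms through $\bm{\pi}^\top P=\bm{\pi}^\top$, leaving $\Hrt(P)\ge\Hrate(P)$. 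Your route --- encode the sample path as the $n$ visit-indicator processes, apply subadditivity of joint entropy, and identify the entropy rate of each stationary renewal visit process as $\mathbb{H}(T_{jj})/\E[T_{jj}]=\pi_j\mathbb{H}(T_{jj})$ --- is more conceptual: it explains \emph{why} the bound must hold (the return statistics of all nodes jointly encode the whole trajectory), and your observation that the node-wise inequality $\mathbb{H}(T_{ii})\ge -\sum_j p_{ij}\log p_{ij}$ fails correctly identifies why a global argument is unavoidable. The trade-offs: the paper's proof is elementary and self-contained (only concavity plus stationarity) and, by tracking when its concavity inequality is strict, also delivers the equality characterization ($\Hrt(P)=\Hrate(P)$ if and only if $P$ is a permutation matrix); your proof leans on the renewal-process entropy-rate formula, whose rigorous justification (e.g., via Shannon--McMillan--Breiman applied to the stationary ergodic visit process, with the delayed first interval and the terminal residual contributing $o(N)$) is exactly the technical debt you flag, and it does not immediately yield the equality case. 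Both arguments are sound.
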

We prove this theorem The proof of the following theorem follows from Lemmas~
\ref{lemma:upperbounds_entropy} and Lemma
\ref{lemma:lowerbound_entropyrate} below.

\begin{remark}
Theorem \ref{thm:relations} establishes a large gap, possibly of size $O(n)$, between $\Hrate(P)$ and $\Hrt(P)$ and,
  thereby, optimizing $\Hrate$ and $\Hrt$ are two
  different matters altogether.
\end{remark}

First, we show that the return time entropy is upper bounded by $n$ times
of the entropy rate. As in~\cite{LE-TMC:93}, we define a \emph{Markov
  trajectory from state $i$ to state $j$} to be a path with initial state
$i$, final state $j$, and no intervening state equal to $j$. Let
$\mathcal{T}_{ij}$ be the set of all Markov trajectories from state $i$ to
state $j$.  Let $\mathbb{P}(\ell)$ denote the probability of a Markov
trajectory $\ell\in\mathcal{T}_{ij}$; clearly
$\sum_{\ell\in\mathcal{T}_{ij}}\mathbb{P}(\ell)=1$. Let $L_{ij}$ be the
Markov trajectory random variable that takes value $\ell$ in
$\mathcal{T}_{ij}$ with probability $\mathbb{P}(\ell)$. Finally, we define
the entropy of $L_{ij}$ by
\begin{equation*}
  \mathbb{H}(L_{ij})=-\sum_{\ell\in\mathcal{T}_{ij}}\mathbb{P}(L_{ij}=\ell)\log\mathbb{P}(L_{ij}=\ell).
\end{equation*}
\begin{lemma}(Entropy of Markov trajectories \cite[Theorem 1]{LE-TMC:93})\label{lemma:MarkovTrajectories}
For an irreducible Markov chain with transition matrix $P$, the entropy
$\mathbb{H}(L_{ii})$ of the random Markov trajectory from state $i$ back to
state $i$ is given by
\begin{equation*}
  \mathbb{H}(L_{ii})=\frac{\Hrate(P)}{{\pi}_i}.
\end{equation*}
\end{lemma}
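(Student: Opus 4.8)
The plan is to follow the approach of Ekroot and Cover~\cite{LE-TMC:93} and reduce the claim to a linear recursion obtained from a one-step analysis of the random trajectory. Write $H_{ij} := \mathbb{H}(L_{ij})$ and let $g_i := -\sum_{k=1}^n p_{ik}\log p_{ik}$ be the local one-step entropy at state $i$, so that $\Hrate(P) = \sum_{i=1}^n \pi_i g_i$ by definition. First I would record that each $H_{ij}$ is finite: since $P$ is irreducible on the finite state space $\until{n}$, the length of a Markov trajectory from $i$ to $j$ has a geometric tail governed by the substochastic matrix $PE_j$ appearing in Lemma~\ref{lemma:systempropertiestravel}, whose spectral radius is strictly less than $1$; combined with the crude count $n^m$ on the number of trajectories of length $m$ and the decay estimate of Lemma~\ref{lemma:SolutionBound}, the defining series for $H_{ij}$ converges.

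Next I would derive the first-step recursion. Conditioning the trajectory $L_{ij}$ on its first transition $X_1$ and applying the chain rule for entropy gives $H_{ij} = \mathbb{H}(X_1\mid X_0=i) + \mathbb{H}(L_{ij}\mid X_1)$. The first term equals $g_i$. For the conditional term, if $X_1 = j$ the trajectory terminates and contributes zero entropy, whereas if $X_1 = k\neq j$ the remainder is exactly a Markov trajectory from $k$ to $j$, contributing $H_{kj}$. Hence $\mathbb{H}(L_{ij}\mid X_1) = \sum_{k\neq j} p_{ik}H_{kj}$, and we obtain $H_{ij} = g_i + \sum_{k\neq j}p_{ik}H_{kj}$ for all $i,j\in\until{n}$.

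I would then pass to matrix form. Writing $H=[H_{ij}]$, $\bm{g}=(g_1,\dots,g_n)^\top$, and using $[H]$ for the diagonal matrix carrying the diagonal of $H$, the identity $\sum_{k\neq j}p_{ik}H_{kj} = (PH)_{ij} - (P[H])_{ij}$ turns the recursion into $(I_n-P)H = \bm{g}\mathbb{1}_n^\top - P[H]$. The decisive step is to left-multiply by $\bm{\pi}^\top$ and exploit stationarity $\bm{\pi}^\top P = \bm{\pi}^\top$: the left-hand side vanishes, and the right-hand side collapses to $\Hrate(P)\mathbb{1}_n^\top - \bm{\pi}^\top[H]$ upon using $\bm{\pi}^\top\bm{g}=\Hrate(P)$. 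Reading off the $j$-th entry, whose value is $\pi_j H_{jj}$, gives $0=\Hrate(P)-\pi_j H_{jj}$, i.e. $H_{jj}=\Hrate(P)/\pi_j$; specializing to $j=i$ proves the claim.

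The main obstacle I anticipate is not the algebra, which is short, but the two well-posedness points underpinning the recursion: justifying that every $H_{ij}$ is finite, so that the chain-rule identity manipulates finite quantities and may be rearranged freely, and verifying that the decomposition of $L_{ij}$ into its first step followed by a sub-trajectory is a genuine bijection of paths that respects probabilities and hence licenses the conditional-entropy split. Both are standard consequences of finite-state irreducibility, but they are precisely what make the otherwise purely linear-algebraic argument rigorous.
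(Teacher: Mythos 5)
The paper does not prove this lemma; it cites it directly as Theorem~1 of Ekroot and Cover~\cite{LE-TMC:93}, and your argument is a correct reconstruction of exactly that reference's proof: the first-step chain-rule decomposition $H_{ij}=g_i+\sum_{k\neq j}p_{ik}H_{kj}$, the matrix identity $(I_n-P)H=\bm{g}\mathbb{1}_n^\top-P[H]$, and left-multiplication by $\bm{\pi}^\top$ to isolate $\pi_j H_{jj}=\Hrate(P)$. Your two flagged well-posedness points (finiteness of $H_{ij}$ via the substochastic tail bound, and the bijective first-step decomposition of trajectories) are the right ones and are handled correctly, so nothing is missing.
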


Through the entropy of the Markov trajectories, we are able to establish the upper bound of the return time entropy in~\eqref{eq:relations}.
\begin{lemma}(Upper bound of the return time entropy by $n$ times of the entropy rate)\label{lemma:upperbounds_entropy}
Given the compact set $\mathcal{P}_{\mathcal{G},\bm{\pi}}^\epsilon$,
\begin{enumerate}
\item the return time entropy is upper bounded by
\begin{equation}\label{eq:JHconnection}
\Hrt(P)\leq n\Hrate(P),  \quad\text{for all } P\in\mathcal{P}_{\mathcal{G},\bm{\pi}}^\epsilon;
\end{equation}
\item the equality in~\eqref{eq:JHconnection} holds if and only if any
  node of the graph $\mathcal{G}$ has the property that all distinct first return paths have
  different length, i.e., the return paths are distinguishable by their lengths, and in this case,
\begin{equation*}
\argmax_{P\in\mathcal{P}_{\mathcal{G},\bm{\pi}}^\epsilon}\Hrt(P)=\argmax_{P\in\mathcal{P}_{\mathcal{G},\bm{\pi}}^\epsilon}\Hrate(P).
\end{equation*}
\end{enumerate}
\end{lemma}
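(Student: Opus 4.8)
The plan is to connect the return time $T_{ii}$ to the random Markov trajectory $L_{ii}$ and then invoke the Ekroot--Cover identity of Lemma~\ref{lemma:MarkovTrajectories}. The starting observation is that, for unitary travel times, the first return time $T_{ii}$ is precisely the length (number of steps) of the first-return trajectory $L_{ii}$; in other words, $T_{ii}=g(L_{ii})$ for the deterministic length map $g$. I would then apply the information-theoretic chain rule: since $T_{ii}$ is a deterministic function of $L_{ii}$, the joint entropy reduces to $\mathbb{H}(L_{ii})=\mathbb{H}(T_{ii})+\mathbb{H}(L_{ii}\mid T_{ii})$, whence $\mathbb{H}(T_{ii})\leq\mathbb{H}(L_{ii})$ with the gap equal to the conditional entropy $\mathbb{H}(L_{ii}\mid T_{ii})\geq0$.

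For claim (i), I would substitute $\mathbb{H}(L_{ii})=\Hrate(P)/\pi_i$ from Lemma~\ref{lemma:MarkovTrajectories}, obtaining $\pi_i\mathbb{H}(T_{ii})\leq\Hrate(P)$ for each $i$. Summing over $i\in\until{n}$ then yields $\Hrt(P)=\sum_{i=1}^n\pi_i\mathbb{H}(T_{ii})\leq n\Hrate(P)$, which is exactly \eqref{eq:JHconnection}.

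For claim (ii), since each $\pi_i>0$, equality $\Hrt(P)=n\Hrate(P)$ holds if and only if $\mathbb{H}(L_{ii}\mid T_{ii})=0$ for every node $i$. Expanding $\mathbb{H}(L_{ii}\mid T_{ii})=\sum_k\mathbb{P}(T_{ii}=k)\,\mathbb{H}(L_{ii}\mid T_{ii}=k)$, this vanishes exactly when, for every realized length $k$, the first-return trajectory of length $k$ is unique, i.e., no two distinct first-return paths at node $i$ share the same length. The crucial remaining step is to argue that this distinguishability-by-length condition is purely topological: because every admissible $P$ assigns $p_{ij}\geq\epsilon>0$ to each edge and $p_{ij}=0$ off the edge set, the collection of first-return paths carrying positive probability coincides with the set of first-return paths in $\mathcal{G}$, independently of the particular $P$. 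Hence the condition holds for all $P\in\mathcal{P}_{\mathcal{G},\bm{\pi}}^\epsilon$ or for none; when it holds, $\Hrt(P)=n\Hrate(P)$ identically on the feasible set, so the two objectives differ only by the constant factor $n$ and share the same maximizers.

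I expect the main obstacle to be part (ii), and specifically the justification that the equality condition is uniform over the feasible set rather than $P$-dependent. This is precisely where the strict positivity $\epsilon>0$ is indispensable, since it guarantees that every graph-theoretic first-return path is actually realized with positive probability and therefore genuinely contributes to $\mathbb{H}(L_{ii}\mid T_{ii})$. One must also be careful that the identification $T_{ii}=g(L_{ii})$ relies on \emph{unitary} travel times; for general integer weights the return time would be the weighted length of the trajectory, and the clean correspondence underlying both the bound and its equality characterization would require modification.
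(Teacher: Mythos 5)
Your proposal is correct and follows essentially the same route as the paper: both lump first-return trajectories by length via $\mathbb{P}(T_{ii}=k)=\sum_{\ell\in\mathcal{T}_{ii},|\ell|=k}\mathbb{P}(L_{ii}=\ell)$, bound $\mathbb{H}(T_{ii})\leq\mathbb{H}(L_{ii})$ (the paper does this term by term with the elementary inequality $(x+y)\log(x+y)\geq x\log x+y\log y$, which is exactly your chain-rule gap $\mathbb{H}(L_{ii}\mid T_{ii})\geq 0$), and then invoke Lemma~\ref{lemma:MarkovTrajectories} and sum over $i$. Your treatment of (ii), including the remark that $p_{ij}\geq\epsilon>0$ makes the distinguishability-by-length condition purely topological and hence uniform over the feasible set, matches and slightly sharpens the paper's argument.
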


\begin{proof}
Regarding (i), the return time random variable $T_{ii}$ is defined by lumping the trajectories in $\mathcal{T}_{ii}$ with the same length,
\begin{equation}\label{eq:time_trajectory}
\mathbb{P}(T_{ii}=k)=\sum_{\ell\in\mathcal{T}_{ii},|\ell|=k}\mathbb{P}(L_{ii}=\ell),
\end{equation}
where $|\ell|$ denotes the length of the path $\ell$. Note that
\begin{small}
\begin{align}\label{eq:inequality_upperbound}
-\mathbb{P}(&T_{ii}=k)\log\mathbb{P}(T_{ii}=k)\nonumber\\
&=-\big(\sum_{\ell\in\mathcal{T}_{ii},|\ell|=k}\mathbb{P}(L_{ii}=\ell)\big)\log\big(\sum_{\ell\in\mathcal{T}_{ii},|\ell|=k}\mathbb{P}(L_{ii}=\ell)\big)\nonumber\\
&\leq-\sum_{\ell\in\mathcal{T}_{ii},|\ell|=k}\mathbb{P}(L_{ii}=\ell)\log\mathbb{P}(L_{ii}=\ell),
\end{align}
\end{small}where we used that $(x+y)\log(x+y)\geq x\log x+y\log y$ for $x,y\geq0$. Since both the return time entropy and the entropy of Markov trajectories are absolutely convergent, we have
\begin{small}
\begin{align*}
\mathbb{H}(T_{ii})&=-\sum_{k=1}^\infty\mathbb{P}(T_{ii}=k)\log\mathbb{P}(T_{ii}=k)\\
&\leq-\sum_{k=1}^\infty\sum_{\ell\in\mathcal{T}_{ii},|\ell|=k}\big(\mathbb{P}(L_{ii}=\ell)\log\mathbb{P}(L_{ii}=\ell)\big)\\
&=\mathbb{H}(L_{ii}),
\end{align*}
\end{small}
which along with Lemma~\ref{lemma:MarkovTrajectories} imply
\begin{equation*}
\Hrt(P)\leq n\Hrate(P).
\end{equation*}

Regarding (ii), the inequality in~\eqref{eq:JHconnection} comes from the inequality \eqref{eq:inequality_upperbound}. If any node of the graph $\mathcal{G}$ has the property that all distinct first return paths have different length, then the summation on the right hand side of \eqref{eq:time_trajectory} only has one term and the inequality in \eqref{eq:inequality_upperbound} becomes an equality. On the other hand, if for some node of $\mathcal{G}$, there are distinct return paths that have the same length, then one needs to lump the paths with the same length and the inequality in \eqref{eq:inequality_upperbound} becomes strict. Moreover, if the equality holds, then $\Hrt(P)$ is a constant $n$ times of $\Hrate(P)$ and thus they have the same maximizer.
\end{proof}

\begin{example}
  For the two-node case in Examples \ref{example:closedform}(i), the return
  time entropy is twice the entropy rate. This is not a coincidence since
  the 2-node complete graph satisfies the property in Lemma
  \ref{lemma:upperbounds_entropy}(ii). Figure~\ref{fig:examplegraph}
  illustrates a graph with $4$ nodes that also satisfies the property in
  Lemma~\ref{lemma:upperbounds_entropy}(ii).
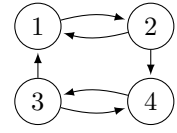
\begin{figure}
\centering
\begin{tikzpicture}
        \node[state,minimum size = 0.2cm] at (0, 1) (nodeone) {\text{$1$}};
        \node[state,minimum size = 0.2cm] at (1.5, 1)     (nodetwo)     {\text{$2$}};
        \node[state,minimum size = 0.2cm] at (0, 0)     (nodethree)     {\text{$3$}};
        \node[state,minimum size = 0.2cm] at (1.5, 0)      (nodefour)    {\text{$4$}};
        \draw[every loop,
              auto=right,
              >=latex,
              ]
            (nodefour)     edge[bend right=15, auto=right] node {} (nodethree)
            (nodetwo)     edge[bend left=15, auto=right] node {} (nodeone)
            (nodetwo)     edge[bend right=0, auto=right] node {} (nodefour)

            (nodethree)     edge[bend right=15]            node {} (nodefour)
            (nodethree)     edge[bend right=0]            node {} (nodeone)

            (nodeone) edge[bend left=15, auto=right] node {} (nodetwo);
\end{tikzpicture}
\caption{An example graph that satisfies the property in Lemma~\ref{lemma:upperbounds_entropy}(ii)}\label{fig:examplegraph}
\end{figure}

\end{example}

In the rest of this subsection, we show that the return time entropy is lower bounded by the entropy rate as shown in~\eqref{eq:relations}.
\begin{lemma}(Lower bound of the return time entropy by the entropy rate)\label{lemma:lowerbound_entropyrate}
Given the compact set $\mathcal{P}_{\mathcal{G},\bm{\pi}}^\epsilon$,
\begin{enumerate}
\item the return time entropy is lower bounded by
\begin{equation}\label{eq:JHlowerbound}
\Hrt(P)\geq \Hrate(P),  \quad\text{for all }  P\in\mathcal{P}_{\mathcal{G},\bm{\pi}}^\epsilon;
\end{equation}
\item the equality in~\eqref{eq:JHlowerbound} holds if and only if $P$ is a permutation matrix.
\end{enumerate}
\end{lemma}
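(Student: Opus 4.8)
The plan is to pass to the bi-infinite stationary chain and recast both quantities as conditional entropies, then exploit the symmetry of mutual information. Let $(X_k)_{k\in\mathbb{Z}}$ be stationary with $X_0\sim\bm{\pi}$, and let $R=\min\{k\ge1:X_k=X_0\}$ be the first return time to the current state; conditioning on $X_0=i$ gives $R$ the law of $T_{ii}$, so $\Hrt(P)=\mathbb{H}(R\mid X_0)$. By stationarity and the Markov property, $\Hrate(P)=\mathbb{H}(X_1\mid X_0)=\mathbb{H}(X_2\mid X_1,X_0)$. Writing the mutual information $I(X_1;R\mid X_0)$ in its two equivalent forms yields the exact identity
\begin{equation*}
\Hrt(P)-\Hrate(P)=\mathbb{H}(R\mid X_0,X_1)-\mathbb{H}(X_1\mid R,X_0),
\end{equation*}
which is the backbone of the argument.

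The easy half is the subtracted term: since conditioning cannot increase entropy, $\mathbb{H}(X_1\mid R,X_0)\le\mathbb{H}(X_1\mid X_0)=\Hrate(P)$ (equivalently, this is the concavity-of-entropy bound applied to $F_k(i,i)=\sum_j p_{ij}\,\mathbb{P}(T_{ji}=k-1)$ viewed as a mixture). It therefore suffices to prove the companion inequality $\mathbb{H}(R\mid X_0,X_1)\ge\Hrate(P)$, for then $\Hrt-\Hrate\ge\Hrate-\Hrate=0$. Unpacking the first step, $\mathbb{H}(R\mid X_0,X_1)=\sum_i\pi_i\sum_{m\neq i}p_{im}\,\mathbb{H}(T_{mi})$ is a $\bm{\pi}$-weighted average of first-passage-time entropies, whereas, using $\sum_i\pi_i p_{im}=\pi_m$, we have $\Hrate(P)=\sum_i\pi_i\sum_m p_{im}\,\mathbb{H}(X_1\mid X_0=m)$.

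I expect this last inequality to be the crux, and it is genuinely global rather than term-by-term. The star graph already shows this: at the hub $\mathbb{H}(T_{ii})=0$ while $\mathbb{H}(X_1\mid X_0=i)>0$, so no per-state bound $\mathbb{H}(T_{ii})\ge\mathbb{H}(X_1\mid X_0=i)$ can hold — the entropy ``lost'' to the lumping of equal-length return paths at one node reappears in the return-time entropies of the downstream nodes, and only the $\bm{\pi}$-weighted sum rebalances it. To capture this I would iterate the first-step decomposition
\begin{equation*}
\mathbb{H}(T_{mi})=\mathbb{H}(X_1\mid X_0=m)+\sum_{m'\neq i}p_{mm'}\,\mathbb{H}(T_{m'i})-\mathbb{H}(X_1\mid T_{mi},X_0=m),
\end{equation*}
telescoping it along an excursion while using $\sum_i\pi_i p_{im}=\pi_m$ and Kac's identity $\mathbb{E}[T_{ii}]=1/\pi_i$ from Remark~\ref{rmk:expectedreturntime}, so that the accumulated per-step entropies sum to the trajectory entropy $\mathbb{H}(L_{ii})=\Hrate(P)/\pi_i$ of Lemma~\ref{lemma:MarkovTrajectories}, with the conditional correction terms accounting for the lumping losses. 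The delicate point is that this series mixes positive ``fresh-entropy'' contributions with negative corrections at every depth; the main obstacle is the bookkeeping and convergence needed to show the net sum stays above $\Hrate(P)$, and I would lean on the geometric decay of $F_k(i,i)$ from Lemma~\ref{lemma:continuity}(i) to justify the manipulations of the infinite sums.

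Finally, for the equality characterization I would track tightness. If $P$ is a permutation then every return time is deterministic and $\Hrt(P)=\Hrate(P)=0$. Conversely, equality forces $\mathbb{H}(X_1\mid R,X_0)=\Hrate(P)$, i.e. $I(X_1;R\mid X_0)=0$, so $X_1$ is conditionally independent of $R$ given $X_0$. But $\{R=1\}=\{X_1=X_0\}$ always reveals whether the walk stays, and more generally short returns reveal the first step; I would make this rigorous by exhibiting, for any non-deterministic row of $P$, a value of $R$ whose conditional distribution depends on $X_1$, contradicting the independence. Hence equality holds only when every transition is deterministic, i.e. $P$ is a permutation matrix.
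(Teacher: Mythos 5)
Your mutual-information identity $\Hrt(P)-\Hrate(P)=\mathbb{H}(R\mid X_0,X_1)-\mathbb{H}(X_1\mid R,X_0)$ is correct, and the bound $\mathbb{H}(X_1\mid R,X_0)\le\Hrate(P)$ is immediate, but the argument then stands or falls on the companion inequality $\mathbb{H}(R\mid X_0,X_1)=\sum_i\pi_i\sum_{m\neq i}p_{im}\mathbb{H}(T_{mi})\ge\Hrate(P)$, and this you do not prove. Rearranging your own identity shows that this ``crux'' is equivalent to $\Hrt(P)\ge 2\Hrate(P)-\mathbb{H}(X_1\mid R,X_0)$, a statement at least as strong as the lemma itself; the proposed remedy (iterating the first-step decomposition of $\mathbb{H}(T_{mi})$ and telescoping against the occupation measure) is only sketched, and you yourself flag the sign bookkeeping and convergence as unresolved. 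So the reduction has not actually reduced the difficulty. For contrast, the paper avoids the infinite iteration entirely: it proves the single first-step inequality $\mathbb{H}(T_{ij})\ge -p_{ij}\log p_{ij}+\sum_{k\neq j}p_{ik}\mathbb{H}(T_{kj})$ by concavity of $x\mapsto -x\log x$, assembles it into the matrix inequality $\mathbb{H}(T)\ge -P\circ\log P+P\mathbb{H}(T)-P[\mathbb{H}(T)]$, and multiplies by $\bm{\pi}^\top(\cdot)\mathbb{1}_n$, whereupon $\bm{\pi}^\top P=\bm{\pi}^\top$ cancels the off-diagonal hitting-time entropies in one stroke.

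The equality characterization is also broken. From the identity, $\Hrt(P)=\Hrate(P)$ forces $\mathbb{H}(R\mid X_0,X_1)=\mathbb{H}(X_1\mid R,X_0)$, and (granting the crux) both must then equal $\Hrate(P)$, so indeed $I(X_1;R\mid X_0)=0$; but your claim that conditional independence of $X_1$ and $R$ given $X_0$ already forces every row to be deterministic is false. Your own star graph is the counterexample: at the hub $h$, $R=2$ deterministically, hence $R$ is independent of $X_1$ even though the row $p_{h\cdot}$ is uniform over the leaves; at each leaf $X_1$ is deterministic, so $I(X_1;R\mid X_0)=0$ globally while $P$ is far from a permutation (and $\Hrt(P)>\Hrate(P)$ strictly there, precisely because the \emph{other} inequality, the unproven crux, is strict). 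The converse therefore has to come from tightness of that inequality, which in the paper's route is exactly the observation that strictness of the concavity step for any row with two or more positive entries propagates to $\Hrt(P)>\Hrate(P)$.
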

\begin{proof}
Regarding (i), note that the first hitting time $T_{ij}$ from state $i$ to state $j$ as defined in~\eqref{eq:passagetime} is a random variable , whose entropy is $\mathbb{H}(T_{ij})$. Then by definition, we have in the case of unitary travel times,
\begin{small}
\begin{align*}
\mathbb{H}&(T_{ij})=-\sum\limits_{k=1}^\infty \mathbb{P}(T_{ij}=k)\log \mathbb{P}(T_{ij}=k)\\
&=-p_{ij}\log p_{ij} - (\sum_{k_1\neq j}p_{ik_1}p_{k_1j})\log(\sum_{k_1\neq j}p_{ik_1}p_{k_1j}) \\
&\quad-(\sum_{k_1,k_2\neq j}p_{ik_1}p_{k_1k_2}p_{k_2j})\log(\sum_{k_1,k_2\neq j}p_{ik_1}p_{k_1k_2}p_{k_2j})\\
&\quad -\cdots\\
&\quad-(\sum_{k_1\cdots k_m\neq j}p_{ik_1}\cdots p_{k_mj})\log(\sum_{k_1\cdots k_m\neq j}p_{ik_1}\cdots p_{k_mj})\\
&\quad-\cdots.
\end{align*}
\end{small}
Since $x\mapsto -x\log x$ is a concave function, for $x_i\geq0$ and for
coefficients $\alpha_i\geq 0$ satisfying $\sum_{i=1}^n\alpha_i=1$, we have
\begin{equation}\label{eq:concavefunction}
  -(\sum_{i=1}^n\alpha_ix_i)\log(\sum_{i=1}^n\alpha_ix_i)\geq -\sum_{i=1}^n \alpha_i (x_i\log x_i).
\end{equation}
Thus, for $m\geq 1$,
\begin{small}
\begin{align}\label{eq:entropyconcavity}
-\mathbb{P}(&T_{ij}=m+1)\log \mathbb{P}(T_{ij}=m+1)\nonumber\\
&=-(\sum_{k_1\cdots k_m\neq j}p_{ik_1}\cdots p_{k_mj})\log(\sum_{k_1\cdots k_m\neq j}p_{ik_1}\cdots p_{k_mj})\nonumber\\
&=- (\sum_{k_1\neq j}p_{ik_1}\sum_{k_2\cdots k_m\neq j}p_{k_1k_2}\cdots p_{k_mj}+p_{ij}\cdot0)\nonumber\\
&\qquad \cdot\log (\sum_{k_1\neq j}p_{ik_1}\sum_{k_2\cdots k_m\neq j}p_{k_1k_2}\cdots p_{k_mj} + p_{ij}\cdot0)\nonumber\\
&\geq- \sum_{k_1\neq j}p_{ik_1}(\sum_{k_2\cdots k_m\neq j}p_{k_1k_2}\cdots p_{k_mj}\nonumber\\
&\qquad\qquad\qquad\quad \cdot\log (\sum_{k_2\cdots k_m\neq j}p_{k_1k_2}\cdots p_{k_mj}))\nonumber\\
&=-\sum_{k_1\neq j}p_{ik_1}\mathbb{P}(T_{k_1j}=m)\log \mathbb{P}(T_{k_1j}=m),
\end{align}
\end{small}where the inequality uses equation~\eqref{eq:concavefunction}. Summing
both sides of~\eqref{eq:entropyconcavity} over $m$ for $m\geq 1$, we
have
\begin{multline}\label{eq:passagetimeentropy}
\mathbb{H}(T_{ij}) \geq -p_{ij}\log p_{ij} + \sum\nolimits_{k_1\neq j}p_{ik_1}\mathbb{H}(T_{k_1j}) \\
=-p_{ij}\log p_{ij} + \sum\nolimits_{k_1=1}^np_{ik_1}\mathbb{H}(T_{k_1j})-p_{ij}\mathbb{H}(T_{jj}).
\end{multline}
Let $\mathbb{H}(T)$ be a matrix whose $(i,j)$-th element is
$\mathbb{H}(T_{ij})$. Then equation~\eqref{eq:passagetimeentropy} can
be put in the matrix form
\begin{align}\label{eq:passagetimeentropy_matrix}
\mathbb{H}(T) \geq -P\circ\log P + P\mathbb{H}(T)-P[\mathbb{H}(T)],
\end{align}
where the inequality and the $\log$ function are entry-wise. Multiplying $\bm{\pi}^\top$ from the left and $\mathbb{1}_n$ from the right on both sides of~\eqref{eq:passagetimeentropy_matrix}, we have
\begin{align*}
\bm{\pi}^\top [\mathbb{H}(T)]\mathbb{1}_n\geq -\bm{\pi}^\top (P\circ\log P) \mathbb{1}_n,
\end{align*}
which is $\Hrt(P)\geq \Hrate(P)$.

Regarding (ii), if $P$ is a permutation matrix, then $\Hrt(P)=\Hrate(P)=0$. On the other hand, if $P$ is not a permutation matrix, then there exist $2$ or more nonzero elements on at least one row of $P$. In this case, the inequality in~\eqref{eq:entropyconcavity} is strict for that row for some $m$, which carries over to \eqref{eq:passagetimeentropy}. Thus, $\Hrt(P)>\Hrate(P)$.

\end{proof}


\section{Truncated return time entropy and its optimization via gradient descent}\label{sec:approximation}
We now introduce the truncated and conditional return time entropy and
setup a gradient descent algorithm.

\subsection{The truncated and conditional return time entropies}
In practical applications, we may discard events occurring with extremely
low probability. In what follows, we study the return time distribution and
its entropy conditioned upon the event that the return time is upper
bounded.  We first introduce a \emph{truncation accuracy} parameter
$0<\eta\ll1$ that upper bounds the cumulative probabilities of very large
return times and we define a \emph{duration} $N_\eta\in\mathbb{Z}_{>0}$ by
\begin{equation}\label{eq:duration}
  N_\eta = \ceil[\Big]{\frac{w_{\max}}{\eta\pi_{\min}}} -1,
\end{equation}
where
$\pi_{\min}=\min_{i\in\until{n}}\{\pi_i\}$ and $\ceil{\cdot}$ is the ceiling function. It is an immediate
consequence of the Markov's inequality that, given the fixed
stationary distribution $\bm{\pi}$, for all $i\in\until{n}$,
\begin{equation*}
  \mathbb{P}(T_{ii}\geq N_\eta+1)\leq \frac{\mathbb{E}[T_{ii}]}{N_\eta+1}\leq\frac{w_{\max}}{\pi_i(N_\eta+1)}\leq \eta,
\end{equation*}
where we used \eqref{eq:ETii}
\begin{equation*}
  \mathbb{E}[T_{ii}]=\frac{\bm{\pi}^\top(P\circ W)\mathbb{1}_n}{\pi_i}\leq\frac{w_{\max}}{\pi_i}.
\end{equation*}

We now define the conditional return time and its entropy.

\begin{definition}(Conditional return time and its entropy)
  Given $P\in \mathcal{P}_{\mathcal{G},\bm{\pi}}^\epsilon$ and a duration
  $N_\eta$, the conditional return time $T_{ii}\,|\,T_{ii}\leq N_\eta$ of
  state $i$ is defined by
  \begin{small}
  \begin{align*}
    T_{ii}\,|\,T_{ii}\leq N_\eta&=\min\Big\{\sum_{k'=0}^{k-1}w_{X_{k'}X_{k'+1}}\,|\,\sum_{k'=0}^{k-1}w_{X_{k'}X_{k'+1}}\leq N_\eta,\\
    &\quad\quad\quad\quad\quad\quad\quad\quad\quad\quad X_0=i,X_k=i, k\geq1\Big\}.
  \end{align*}
  \end{small}
with probability mass function
\begin{equation}\nonumber
\mathbb{P}(T_{ii}=k\,|\,T_{ii}\leq N_\eta)=\frac{F_k(i,i)}{\sum_{k=1}^{N_\eta} F_k(i,i)}.
\end{equation}
Moreover, the conditional return time entropy function
$\subscr{\Hrt}{cond,$\eta$}:\mathcal{P}_{\mathcal{G},\bm{\pi}}^\epsilon
\mapsto \real_{\geq 0}$ is
\begin{small}
\begin{align*}
\subscr{\Hrt}{cond,$\eta$}(P)&=\sum_{i=1}^n\pi_i\mathbb{H}(T_{ii}\,|\,T_{ii}\leq N_\eta)\\
&=-\sum_{i=1}^n\pi_i\sum\limits_{k=1}^{N_\eta} \frac{F_k(i,i)}{\sum\limits_{k=1}^{N_\eta} F_k(i,i)}\log \frac{F_k(i,i)}{\sum\limits_{k=1}^{N_\eta} F_k(i,i)}.
\end{align*}
\end{small}
\end{definition}

Given the duration $N_\eta$, $\subscr{\Hrt}{cond,$\eta$}(P)$ is
a finite sum of continuously differentiable functions and thus more
tractable than the original return time entropy function
$\Hrt(P)$. Next, we introduce a truncated entropy that is even
simpler to evaluate.

\begin{definition}(Truncated return time entropy function)\label{def:Jtrunc}
Given a compact set $\mathcal{P}_{\mathcal{G},\bm{\pi}}^\epsilon$ and
the duration $N_\eta$, define the \emph{truncated return time entropy
  function}
$\subscr{\Hrt}{trunc,$\eta$}:\mathcal{P}_{\mathcal{G},\bm{\pi}}^\epsilon
\mapsto \real_{\geq 0}$ by
\begin{equation*}
\subscr{\Hrt}{trunc,$\eta$}(P)=-\sum_{i=1}^n\pi_i\sum\limits_{k=1}^{N_\eta} F_k(i,i)\log F_k(i,i).
\end{equation*}
\end{definition}

The following lemma shows that, for small $\eta$, the truncated return
time entropy $\subscr{\Hrt}{trunc,$\eta$}(P)$ is a good approximation for the
conditional return time entropy $\subscr{\Hrt}{cond,$\eta$}(P)$. Furthermore, when $\eta$ is sufficiently small, the truncated return time entropy $\subscr{\Hrt}{trunc,$\eta$}(P)$ is also a good approximation for the original return time entropy function $\Hrt(P)$.

\begin{lemma}\label{lemma:trunccondrelation}
  (Approximation bounds)
  Given $P\in \mathcal{P}_{\mathcal{G},\bm{\pi}}^\epsilon$ and the truncation accuracy $\eta$, we have
\begin{enumerate}
\item the conditional return time entropy is related to the truncated return time entropy by
\begin{equation}\label{eq:trunccondrelation}
    \subscr{\Hrt}{trunc,$\eta$}(P)+ \log (1-\eta)<\subscr{\Hrt}{cond,$\eta$}(P)<\frac{\subscr{\Hrt}{trunc,$\eta$}(P)}{1-\eta};
\end{equation}

\item $\Hrt(P)\geq \subscr{\Hrt}{trunc,$\eta$}(P)$ holds trivially and if
  \begin{equation}\label{eq:etacondition}
  \eta \leq  \frac{w_{\max}\log\lambda_{\max}}{\pi_{\min}(\log\lambda_{\max}-\log c-1)},
  \end{equation}
  then
  \begin{equation}\label{eq:errorbound}
    \Hrt(P)-\subscr{\Hrt}{trunc,$\eta$}(P) \leq \frac{c \log(\lambda_{\max}^{-1}) }{(1-\lambda_{\max})^2} ( 1+ N_\eta ) \lambda_{\max}^{N_\eta} ,
\end{equation}
where $c$ and $\lambda_{\max}$ are given as in Lemma~\ref{lemma:continuity}(i);

\item $\displaystyle
  \Hrt(P)
    =
    \lim_{\eta\to 0^+} \subscr{\Hrt}{cond,$\eta$}(P)
    =
    \lim_{\eta\to 0^+}  \subscr{\Hrt}{trunc,$\eta$}(P)    $.

  \end{enumerate}
\end{lemma}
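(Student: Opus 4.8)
The plan is to reduce the three claims to per-state quantities and then control the truncation tail using the geometric bound of Lemma~\ref{lemma:continuity}(i). For each state $i$ I write $S_i=\sum_{k=1}^{N_\eta}F_k(i,i)=\mathbb{P}(T_{ii}\leq N_\eta)$, and I abbreviate the per-state truncated and conditional entropies as $h_i=-\sum_{k=1}^{N_\eta}F_k(i,i)\log F_k(i,i)\geq 0$ and $\tilde h_i=\mathbb{H}(T_{ii}\mid T_{ii}\leq N_\eta)$, so that $\subscr{\Hrt}{trunc,$\eta$}(P)=\sum_i\pi_i h_i$ and $\subscr{\Hrt}{cond,$\eta$}(P)=\sum_i\pi_i\tilde h_i$. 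The Markov-inequality computation preceding the statement already gives $\mathbb{P}(T_{ii}>N_\eta)\leq\eta$, hence $1-\eta\leq S_i<1$, where $S_i<1$ holds because in an irreducible chain the return time has unbounded support. The engine for part~(i) is the algebraic identity $\tilde h_i=h_i/S_i+\log S_i$, obtained by expanding $\log\!\big(F_k(i,i)/S_i\big)=\log F_k(i,i)-\log S_i$ in the definition of $\tilde h_i$ and using $\sum_{k\le N_\eta}F_k(i,i)/S_i=1$.

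For part~(i) I feed $1-\eta\le S_i<1$ into this identity. Since $h_i\ge 0$ and $\log S_i\le 0$, the estimate $1/S_i\le 1/(1-\eta)$ gives $\tilde h_i\le h_i/S_i\le h_i/(1-\eta)$, and $S_i<1$ makes the first step strict; summing against $\pi_i$ yields $\subscr{\Hrt}{cond,$\eta$}(P)<\subscr{\Hrt}{trunc,$\eta$}(P)/(1-\eta)$. For the lower bound, $1/S_i\ge 1$ and $\log S_i\ge\log(1-\eta)$ give $\tilde h_i\ge h_i+\log(1-\eta)$, strict because $S_i<1$ forces $h_i/S_i>h_i$; summing and using $\sum_i\pi_i=1$ gives the left inequality. (In the degenerate case of a deterministic cycle, where $S_i=1$ and $h_i=0$, all inequalities collapse to equalities, so the strictness tacitly assumes a genuinely random chain.)

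Part~(ii): the bound $\Hrt(P)\ge\subscr{\Hrt}{trunc,$\eta$}(P)$ is immediate since the truncated sum simply omits the nonnegative tail terms $-F_k(i,i)\log F_k(i,i)$. For the quantitative estimate I write the difference as the tail $\sum_i\pi_i\sum_{k>N_\eta}\big(-F_k(i,i)\log F_k(i,i)\big)$. The role of hypothesis~\eqref{eq:etacondition} is precisely to force $c\lambda_{\max}^{k}\le e^{-1}$ for every $k\ge N_\eta+1$: using $N_\eta+1\ge w_{\max}/(\eta\pi_{\min})$ together with $\log\lambda_{\max}<0$ and $\log c>0$, a short sign-chasing shows~\eqref{eq:etacondition} implies $c\lambda_{\max}^{N_\eta+1}\le e^{-1}$. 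On that range $x\mapsto-x\log x$ is increasing, so $F_k(i,i)\le c\lambda_{\max}^k$ gives $-F_k(i,i)\log F_k(i,i)\le-c\lambda_{\max}^k\log(c\lambda_{\max}^k)$. Since $c\ge\lambda_{\max}^{-w_{\max}}>1$ by Lemma~\ref{lemma:continuity}(i), I may discard the term $-\log c\le 0$ to reach the cleaner bound $-c\lambda_{\max}^k\log(c\lambda_{\max}^k)\le c\log(\lambda_{\max}^{-1})\,k\lambda_{\max}^k$. Summing the resulting arithmetico-geometric tail via $\sum_{k>N_\eta}k\lambda_{\max}^k\le(1+N_\eta)\lambda_{\max}^{N_\eta}/(1-\lambda_{\max})^2$ and using $\sum_i\pi_i=1$ delivers exactly~\eqref{eq:errorbound}.

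Part~(iii) then follows by a sandwich: as $\eta\to0^+$ we have $N_\eta\to\infty$, the right-hand side of~\eqref{eq:errorbound} tends to $0$ because $(1+N_\eta)\lambda_{\max}^{N_\eta}\to0$, so $\subscr{\Hrt}{trunc,$\eta$}(P)\to\Hrt(P)$, and then part~(i), with $\log(1-\eta)\to0$ and $1/(1-\eta)\to1$, squeezes $\subscr{\Hrt}{cond,$\eta$}(P)$ to the same limit. I expect the main obstacle to be part~(ii): verifying that~\eqref{eq:etacondition} is the correct threshold making $c\lambda_{\max}^{N_\eta+1}\le e^{-1}$ requires careful handling of the negative quantity $\log\lambda_{\max}$, and matching the explicit constant in~\eqref{eq:errorbound} needs the two simplifications (discarding $\log c$ via $c\ge1$ and the polynomial-times-geometric tail estimate) to line up precisely. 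Parts~(i) and~(iii) are comparatively routine once the identity $\tilde h_i=h_i/S_i+\log S_i$ and the error bound are in hand.
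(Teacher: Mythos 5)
Your proof is correct and follows essentially the same route as the paper's: part (i) rests on the same identity $\tilde h_i = h_i/S_i+\log S_i$ combined with $1-\eta\le S_i<1$, part (ii) is the paper's geometric-tail estimate from Lemma~\ref{lemma:continuity}(i) with only a cosmetic reordering (you drop the $-\log c$ term summand-by-summand before summing the arithmetico-geometric tail, whereas the paper sums first via \eqref{eq:tailbound} and simplifies afterward), and part (iii) is the same sandwich. Your parenthetical caveat about strictness when some $S_i=1$ is a fair observation, but it applies equally to the paper's own argument and does not affect the result on any graph where at least one return time has unbounded support.
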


\begin{proof}
Regarding (i), for $\subscr{\Hrt}{cond,$\eta$}(P)$, we have
\begin{small}
\begin{align*}
\subscr{\Hrt}{cond,$\eta$}(P)&=-\sum_{i=1}^n\pi_i\sum\limits_{k=1}^{N_\eta} \frac{F_k(i,i)}{\sum\limits_{k=1}^{N_\eta} F_k(i,i)}\log \frac{F_k(i,i)}{\sum\limits_{k=1}^{N_\eta} F_k(i,i)}\\
&=-\sum_{i=1}^n\pi_i\big(\frac{\sum\limits_{k=1}^{N_\eta} F_k(i,i)\log F_k(i,i)}{\sum\limits_{k=1}^{N_\eta} F_k(i,i)}- \log {\sum\limits_{k=1}^{N_\eta} F_k(i,i)}\big).
\end{align*}
\end{small}
On one hand,
\begin{small}
\begin{align}\label{eq:Jcondupper}
\subscr{\Hrt}{cond,$\eta$}(P)&>-\sum_{i=1}^n\pi_i\big(\sum\limits_{k=1}^{N_\eta} F_k(i,i)\log F_k(i,i)- \log {\sum\limits_{k=1}^{N_\eta} F_k(i,i)}\big)\nonumber\\
&\geq-\sum_{i=1}^n\pi_i\sum\limits_{k=1}^{N_\eta} F_k(i,i)\log F_k(i,i)+ \log (1-\eta).
\end{align}
\end{small}
On the other hand,
\begin{small}
\begin{align}\label{eq:Jcondlower}
\begin{split}
\subscr{\Hrt}{cond,$\eta$}(P)&<-\sum_{i=1}^n\pi_i\frac{1}{\sum\limits_{k=1}^{N_\eta} F_k(i,i)}\sum\limits_{k=1}^{N_\eta} F_k(i,i)\log F_k(i,i)\\
&\leq-\frac{1}{1-\eta}\sum_{i=1}^n\pi_i\sum\limits_{k=1}^{N_\eta} F_k(i,i)\log F_k(i,i).
\end{split}
\end{align}
\end{small}
Combining \eqref{eq:Jcondupper} and \eqref{eq:Jcondlower}, we have \eqref{eq:trunccondrelation}.

Regarding (ii), if $\eta$ satisfies \eqref{eq:etacondition}, we have
$c\lambda_{\max}^{N_\eta}\leq e^{-1}$. Then, following the same
arguments as in the proof of Lemma~\ref{lemma:continuity}(ii) and
replacing $K$ in \eqref{eq:tailbound} with $N_\eta$, we have
\begin{small}
\begin{align*}
    &\Hrt(P)-\subscr{\Hrt}{trunc,$\eta$}(P) \nonumber\\
    &\leq -c\left(\frac{\lambda_{\max}^{N_\eta}}{1-\lambda_{\max}}\log (c\lambda_{\max}^{N_\eta})+\frac{\lambda_{\max}^{N_\eta+1}}{(1-\lambda_{\max})^2}\log (\lambda_{\max})\right)\\
 &\leq - \frac{c \lambda_{\max}^{N_\eta} }{(1-\lambda_{\max})^2} \left( N_\eta \log(\lambda_{\max})+ \lambda_{\max}\log (\lambda_{\max})+ \log (c) \right) \\
 &\leq - \frac{c \lambda_{\max}^{N_\eta} }{(1-\lambda_{\max})^2} \left( N_\eta \log(\lambda_{\max})+ \log (\lambda_{\max}) \right) \\
&= \frac{c \log(\lambda_{\max}^{-1}) }{(1-\lambda_{\max})^2} ( 1+ N_\eta ) \lambda_{\max}^{N_\eta} .   \label{eq:errorbound}
\end{align*}
\end{small}

Regarding (iii), the results follow from \eqref{eq:trunccondrelation} and \eqref{eq:errorbound}, respectively. Specifically, in \eqref{eq:errorbound}, since $0<\lambda_{\max}<1$, the error
$\Hrt(P) - \subscr{\Hrt}{trunc,$\eta$}(P)$ goes to $0$
exponentially fast as $\eta$ goes to $0$ ($N_\eta \to \infty$).
\end{proof}

\subsection{The gradient of the truncated return time entropy}
Lemma~\ref{lemma:trunccondrelation} establishes how
$\subscr{\Hrt}{trunc,$\eta$}(P)$ is a good approximation to both of
$\Hrt(P)$ and $\subscr{\Hrt}{cond,$\eta$}(P)$. Since it is also easier to
compute $\subscr{\Hrt}{trunc,$\eta$}(P)$ than the other two quantities, we
focus on optimizing $\subscr{\Hrt}{trunc,$\eta$}(P)$ by computing its
gradient.

For $k\in\mathbb{Z}_{>0}$, define $G_k=\frac{\partial \vecz(F_k)}{\partial
  \vecz(P)}\in\mathbb{R}^{n^2\times n^2}$ and note
\begin{equation}\label{eq:Gkanotherform}
{G_k} = \begin{bmatrix}
\frac{\partial \vecz(F_k)}{\partial p_{11}}&
\frac{\partial \vecz(F_k)}{\partial p_{21}}&
 \cdots &
\frac{\partial \vecz(F_k)}{\partial p_{(n-1)n}}&
\frac{\partial \vecz(F_k)}{\partial p_{nn}}
\end{bmatrix}.
\end{equation}

\begin{lemma}(Gradient of the truncated return time entropy function)\label{lemma:gradient}
  Given $P\in\mathcal{P}_{\mathcal{G},\bm{\pi}}^\epsilon$, the matrix
  sequence $G_k$ in \eqref{eq:Gkanotherform} satisfies the iteration for
  $k\in\mathbb{Z}_{>0}$,

  \begin{align}\label{eq:GradientRecursive}
    G_{k} &= [\vecz(\mathbf{1}_{\{k\mathbb{1}_n\mathbb{1}_n^\top=W\}})] +\sum_{i=1}^{w_{\max}}\Phi_i G_{k-i}\nonumber\\
    &\quad+ \sum_{i=1}^n\sum_{j=1}^n (E_{j}F_{k-w_{ij}}^\top\otimes I_n)[\vecz(\mathbb{e}_{i}\mathbb{e}_{j}^\top)]\mathbf{1}_{\{w_{ij}>0\}},
  \end{align}
   where the initial conditions are $G_{k}=\mathbb{0}_{n^2\times n^2}$ for $k\leq0$. Moreover, the vectorization of the gradient of
  $\subscr{\Hrt}{trunc,$\eta$}$ satisfies
\begin{small}
\begin{multline}\label{eq:Gradient}
  \vecz\Big(\frac{\partial \subscr{\Hrt}{trunc,$\eta$}(P)}{\partial P}\Big) = \\
  -\sum_{i=1}^n\pi_i\sum_{k=1}^{N_\eta}
  \frac{\partial \big(F_k(i,i)\log F_k(i,i)\big)}{\partial F_k(i,i)}
  G_k^\top\mathbb{e}_{(i-1)n+i},
\end{multline}
\end{small}
where $\mathbb{e}_{(i-1)n+i}\in \real^{n^2}$ and
\begin{small}
\begin{equation*}
    \frac{\partial F_k(i,i)\log F_k(i,i)}{\partial F_k(i,i)} =    \begin{cases}
      1+\log(F_k(i,i)), &\quad \text{if}\enspace F_k(i,i)>0, \\
          0, &\quad \text{if}\enspace F_k(i,i)= 0.
          \end{cases}
\end{equation*}
\end{small}
\end{lemma}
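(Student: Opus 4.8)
The plan is to establish the recursion for $G_k$ by differentiating the dynamical system \eqref{eq:dynamicswithtravel} term by term, and then to obtain the gradient of $\subscr{\Hrt}{trunc,$\eta$}$ via the chain rule. First I would differentiate both sides of \eqref{eq:dynamicswithtravel} with respect to $\vecz(P)$. The left-hand side is exactly $G_k$ by definition. The right-hand side has two pieces: the impulse input $\vecz(P\circ \mathbf{1}_{\{k\mathbb{1}_n\mathbb{1}_n^\top=W\}})$ and the delayed feedback sum $\sum_{i,j}p_{ij}(E_j\otimes\mathbb{e}_i\mathbb{e}_j^\top)\vecz(F_{k-w_{ij}})$. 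Differentiating the impulse term should directly yield $[\vecz(\mathbf{1}_{\{k\mathbb{1}_n\mathbb{1}_n^\top=W\}})]$, since the Hadamard product with a constant indicator mask picks out exactly those entries of $P$ with the matching weight, and the Jacobian of $\vecz(P \circ M)$ with respect to $\vecz(P)$ is $[\vecz(M)]$.

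\medskip
The feedback sum is where the product rule splits the derivative into two contributions. Each summand $p_{ij}(E_j\otimes\mathbb{e}_i\mathbb{e}_j^\top)\vecz(F_{k-w_{ij}})$ depends on $P$ both through the explicit scalar coefficient $p_{ij}$ and through $\vecz(F_{k-w_{ij}})$, whose Jacobian is $G_{k-w_{ij}}$ by definition. Differentiating through $F_{k-w_{ij}}$ and summing gives $\sum_{i,j}p_{ij}(E_j\otimes\mathbb{e}_i\mathbb{e}_j^\top)G_{k-w_{ij}}$; recognizing the grouping by weight value via \eqref{eq:phimatrix}, this collapses exactly to $\sum_{i=1}^{w_{\max}}\Phi_i G_{k-i}$. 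The explicit-$p_{ij}$ contribution gives a term proportional to $(E_j\otimes\mathbb{e}_i\mathbb{e}_j^\top)\vecz(F_{k-w_{ij}})$ times $\partial p_{ij}/\partial \vecz(P)$, and the bookkeeping here is the main obstacle: I must carefully convert this outer-product-type derivative into the stated Kronecker form $(E_j F_{k-w_{ij}}^\top\otimes I_n)[\vecz(\mathbb{e}_i\mathbb{e}_j^\top)]$. The key identities are the mixed-product rule for Kronecker products and the vectorization identity $\vecz(ABC)=(C^\top\otimes A)\vecz(B)$; applying these to rewrite $(E_j\otimes\mathbb{e}_i\mathbb{e}_j^\top)\vecz(F_{k-w_{ij}})$ and isolating the selector $[\vecz(\mathbb{e}_i\mathbb{e}_j^\top)]$ for the single coordinate $p_{ij}$ requires attention to index ordering, and the indicator $\mathbf{1}_{\{w_{ij}>0\}}$ accounts for the fact that only edges present in $\mathcal{E}$ contribute.

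\medskip
Having established the recursion, the gradient formula \eqref{eq:Gradient} follows from the chain rule applied to $\subscr{\Hrt}{trunc,$\eta$}(P)=-\sum_i\pi_i\sum_{k=1}^{N_\eta}F_k(i,i)\log F_k(i,i)$. I would note that $F_k(i,i)$ is the $((i-1)n+i)$-th entry of $\vecz(F_k)$, i.e.\ $F_k(i,i)=\mathbb{e}_{(i-1)n+i}^\top\vecz(F_k)$, so that $\partial F_k(i,i)/\partial\vecz(P) = \mathbb{e}_{(i-1)n+i}^\top G_k$, whence the gradient with respect to $\vecz(P)$ picks up $G_k^\top\mathbb{e}_{(i-1)n+i}$ after transposition. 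Differentiating the scalar $-F_k(i,i)\log F_k(i,i)$ with respect to $F_k(i,i)$ gives the factor $\partial(F_k(i,i)\log F_k(i,i))/\partial F_k(i,i)$, with the stated case distinction at $F_k(i,i)=0$ handled by the convention $0\log 0 = 0$ and the fact that $x\log x$ has a one-sided derivative limit handled directly by the piecewise definition. Assembling these pieces and summing over $i$ and $k$ yields \eqref{eq:Gradient}. The recursion derivation is the substantive part; the chain-rule assembly is routine once the indexing of $F_k(i,i)$ within $\vecz(F_k)$ is fixed.
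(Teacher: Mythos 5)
Your proposal is correct and follows essentially the same route as the paper's proof: differentiate \eqref{eq:dynamicswithtravel} entrywise with respect to each $p_{uv}$, split the feedback sum by the product rule into the $\sum_i\Phi_iG_{k-i}$ part and the explicit-coefficient part, and convert the latter via $(E_v\otimes\mathbb{e}_u\mathbb{e}_v^\top)\vecz(F_{k-w_{uv}})=\vecz(\mathbb{e}_u\mathbb{e}_v^\top F_{k-w_{uv}}E_v)=(E_vF_{k-w_{uv}}^\top\otimes I_n)\vecz(\mathbb{e}_u\mathbb{e}_v^\top)$, which is exactly the vectorization identity you cite (with $E_v$ diagonal, hence symmetric). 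The chain-rule assembly of \eqref{eq:Gradient} via $F_k(i,i)=\mathbb{e}_{(i-1)n+i}^\top\vecz(F_k)$ likewise matches the paper.
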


\begin{proof}
For $k\in\mathbb{Z}_{>0}$, according to \eqref{eq:dynamicswithtravel}, we have for $p_{uv}>0$,
\begin{small}
  \begin{align*}
    \frac{\partial \vecz(F_k)}{\partial p_{uv}}&=\vecz(\mathbb{e}_u\mathbb{e}_v^\top)\mathbf{1}_{\{k=w_{uv}\}}\\
    &\quad+(E_v\otimes\mathbb{e}_u\mathbb{e}_v^\top)\vecz(F_{k-w_{uv}})\\
    &\quad+\sum_{i=1}^n\sum_{j=1}^np_{ij}(E_j\otimes\mathbb{e}_i\mathbb{e}_j^\top)\frac{\partial \vecz(F_{k-w_{ij}})}{\partial p_{uv}},
\end{align*}
\end{small}
where the second term on the right hand side satisfies
\begin{align*}\nonumber
  (E_v\otimes\mathbb{e}_u\mathbb{e}_v^\top)\vecz(F_{k-w_{uv}})
  &=\vecz(\mathbb{e}_u\mathbb{e}_v^\top F_{k-w_{uv}}E_v)\\
  &=(E_vF_{k-w_{uv}}^\top\otimes I_n)\vecz(\mathbb{e}_{u}\mathbb{e}_v^\top).
\end{align*}
Stacking $\frac{\partial\vecz(F_{k})}{\partial p_{uv}}$'s in a matrix as \eqref{eq:Gkanotherform}, we obtain \eqref{eq:GradientRecursive}.

Since $\subscr{\Hrt}{trunc,$\eta$}(P)$ only involves $F_k(i,i)$ for $i=\until{n}$, we only need the corresponding columns in $G_k^\top$ to compute the gradient, which is realized by multiplying the standard unit vector as in \eqref{eq:Gradient}.
\end{proof}
\begin{remark}
  Iteration~\eqref{eq:GradientRecursive} is an exponentially stable
  discrete-time delayed linear system subject to and a finite number of impulse inputs and an exponentially vanishing input. Hence,
  the state $G_k\to0$ exponentially fast as $k\to\infty$.
\end{remark}

\subsection{Optimizing the truncated entropy via gradient projection}
Motivated by the previous analysis, we consider the following problem.
\begin{problem}\label{probLP}
(Maximization of the truncated return time entropy) Given a strongly
  connected directed graph $\mathcal{G}$ and the stationary distribution
  $\bm{\pi}$, pick a minimum edge weight $\epsilon>0$ and a truncation
  accurate parameter $\eta>0$, the maximization of the truncated return
  time entropy function is as follows.
\begin{align*}
& \text{maximize}
& &\subscr{\Hrt}{trunc,$\eta$}(P)\\
& \text{subject to}
&& P\in\mathcal{P}_{\mathcal{G},\bm{\pi}}^\epsilon
\end{align*}
\end{problem}

To solve numerically this nonlinear program, we exploit the results in
Lemma~\ref{lemma:gradient} and adopt the gradient projection method as
presented in~\cite[Chapter 2.3]{DPB:16}:
\smallskip
 \begin{algorithmic}[1]
   \STATE select: minimum edge weight $\epsilon\ll1$, truncation accuracy
   $\eta\ll 1$, and initial condition $P_0$ in
   $\mathcal{P}_{\mathcal{G},\bm{\pi}}^\epsilon$
   \FOR{iteration parameter $s=0:\text{(number-of-steps)}$}
   \STATE $\{G_k\}_{k\in\until{N_\eta}} :=$ solution to iteration~\eqref{eq:GradientRecursive} at $P_s$
   \STATE $\Delta_s := $ gradient of $\subscr{\Hrt}{trunc,$\eta$}(P_s)$ via equation~\eqref{eq:Gradient}
   \STATE $P_{s+1}:= \operatorname{projection}_{\mathcal{P}_{\mathcal{G},\bm{\pi}}^\epsilon}(P_s + \text{(step size)} \cdot  \Delta_s)$
   \ENDFOR
 \end{algorithmic}
 \smallskip

 We analyze the computational complexity of this algorithm. To compute
 step~\algostep{3}, we need to evaluate the right-hand side of
 equation~\eqref{eq:GradientRecursive} by computing three terms. For the
 first term, we need to do $m$ comparisons, where $m$ is the number of
 edges in the graph (i.e., the number of variables in the transition
 matrix), and it takes $O(m)$ elementary operations. For the second term,
 note that the matrices $\Phi_i\in\real^{n^2\times{n^2}}$ introduced in
 equation~\eqref{eq:phimatrix} can be precomputed and is block diagonal
 with $n$ blocks of size $n\times{n}$. Also note that
 $G_k\in\real^{n^2\times{n^2}}$ has only $m$ nonzero columns. Thus, we need
 $O(w_{\max}mn^3)$ operations.  For the third term, $F_k$ is updated by
 equation~\eqref{eq:augmentedsystem}, which requires $O(w_{\max}n^3)$ and
 is the main computational cost. Therefore, it takes $O(w_{\max}mn^3)$ to
 compute one update of iteration~\eqref{eq:GradientRecursive}.  Thus, it
 takes $O(N_{\eta}w_{\max}mn^3)$ elementary operations to complete
 step~\algostep{3}. In step~\algostep{5}, we need to solve a least square
 problem with linear equalities and inequalities constraints; which
 requires $O(m^3)$ \cite{SB-LV:04}.


\section{Numerical results}\label{sec:Simulation}
In this section, we provide numerical results on the computation of the
maximum return time entropy chain (Subsection~\ref{subsec:comp-MRE}) and
its application to robotic surveillance problems
(Subsection~\ref{subsec:app-robotics}).
We compute and compare three chains:
\begin{enumerate}
\item the Markov chain that maximizes the return time entropy (solution of
  Problem~\ref{prob:weightedentropy}), abbreviated as the
  \emph{MaxReturnEntropy chain}.  This chain may be computed for a directed
  graph with arbitrary integer-valued travel times.  Since we do not have a
  way to solve Problem~\ref{prob:weightedentropy} directly, the
  MaxReturnEntropy chain is approximated by the solution of
  Problem~\ref{probLP}, which is solved via the gradient projection
  method. Unless otherwise stated, we choose truncation accuracy $\eta =
  0.1$. Note that \eqref{eq:duration} is quite conservative and the actual
  probabilities being discarded is much less than $0.1$.

\item the Markov chain that maximizes the entropy rate, abbreviated as the
  \emph{MaxEntropyRate chain}. This chain can be computed for a directed
  graph with unitary weights via solving a convex program. Further, if the
  graph is undirected, the MaxEntropyRate chain can be computed efficiently
  using the method in \cite{MG-SJ-FB:17b};

\item the Markov chain that minimizes the (weighted) Kemeny constant,
  abbreviated as the \emph{MinKemeny chain}. This chain may be computed for
  a directed graph with arbitrary travel times via solving a nonlinear
  nonconvex program. We compute this chain using the solver implemented in
  the KNITRO/TOMLAB package.

\end{enumerate}

\subsection{Computation, comparison and intuitions}\label{subsec:comp-MRE}
We divide this subsection into two parts. In the first part, we first compare $3$ chains on graphs that have unitary travel times. We then summarize several observations in computing the MaxReturnEntropy chain. Finally, we visualize and plot the chains as well as the return time distributions. In the second part, we compare the MaxReturnEntropy chain with the MinKemeny chain on a realistic map taken from \cite[Section 6.2]{SA-EF-SLS:14} with travel times.

\begin{figure*}
x\begin{center}
\subfigure[MaxReturnEntropy chain on ring graph]{
\includegraphics[scale=0.38]{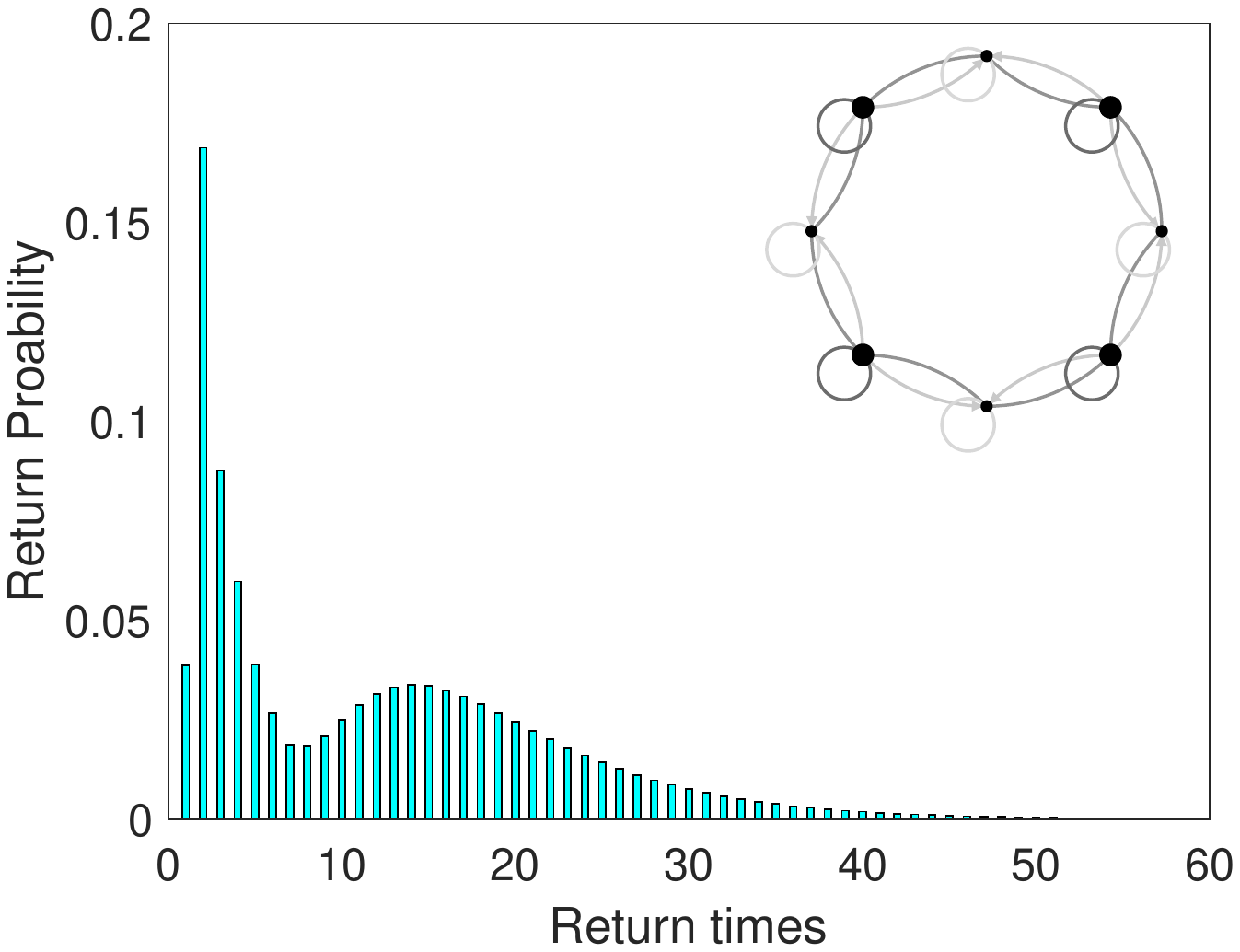}}
\subfigure[ MaxEntropyRate chain on ring graph]{
\includegraphics[scale=0.38]{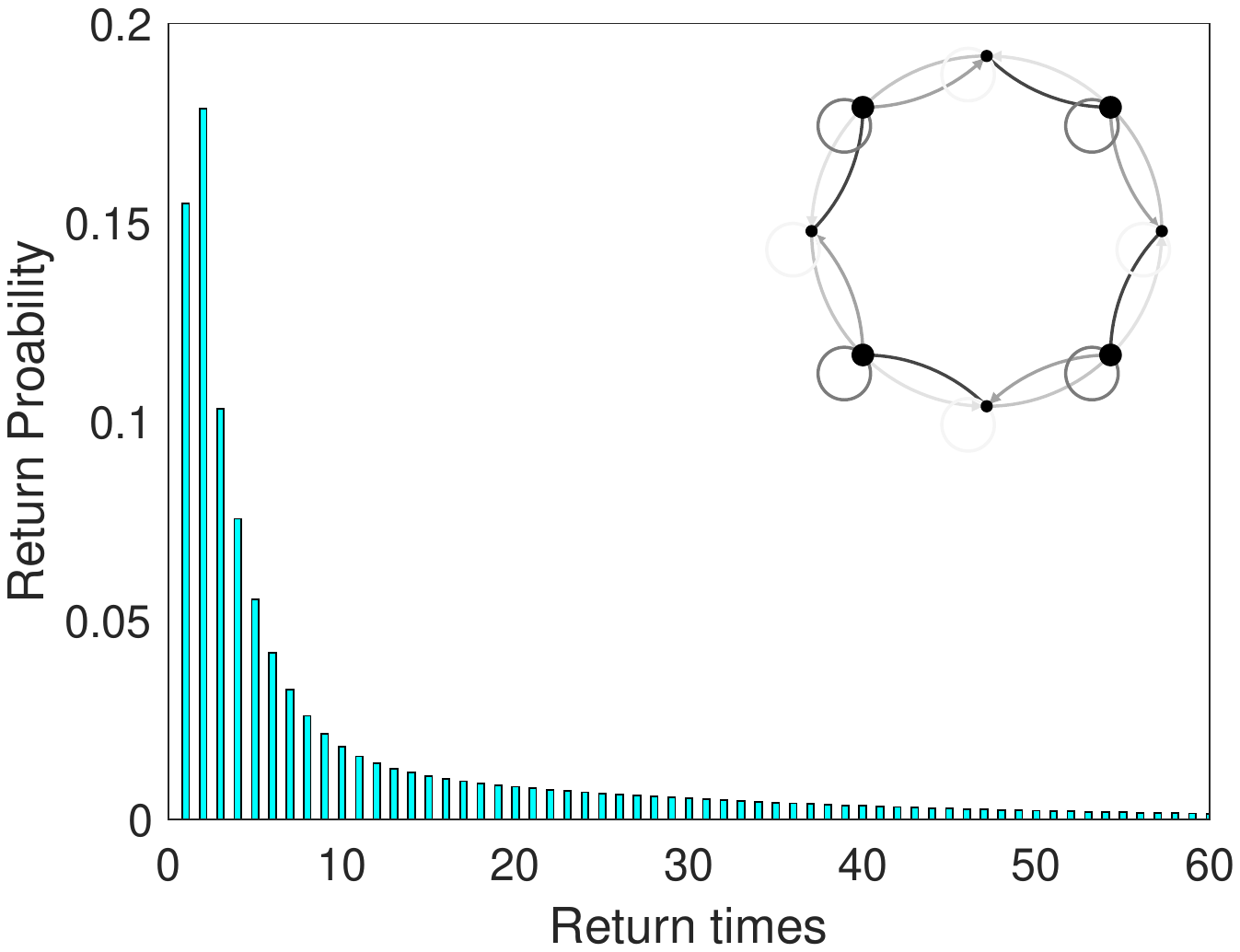}}
\subfigure[MinKemeny chain on ring graph]{
\includegraphics[scale=0.38]{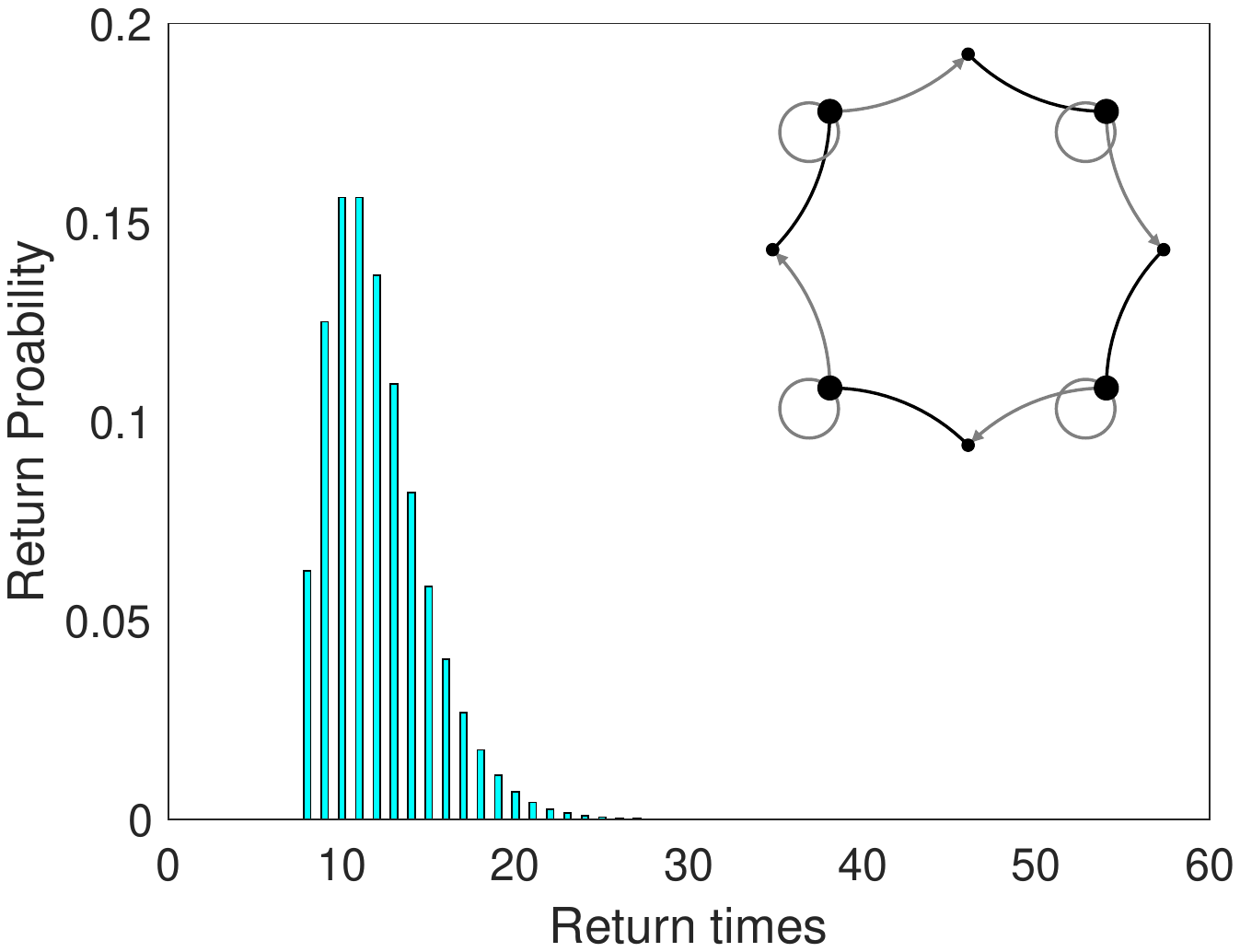}}
\caption{Return time distributions of node $1$ (i.e., top node) on an
  $8$-node ring graph with stationary distribution
  $\bm{\pi}=[1/12,1/6,\dots,1/12,1/6]^\top$. Although the expectations of
  the first return time distributions in the figure are the same, the
  histogram is remarkably different for different chains. Specifically, for
  the nonreversible MaxRetrunEntropy chain, the distribution is bimodal and
  generates more entropy. The node size is proportional to the stationary distribution.}\label{Fig:returndistribution}
\end{center}
\end{figure*}

\begin{figure*}
\begin{center}
\subfigure[MaxReturnEntropy chain on $4\times4$ grid]{
\includegraphics[scale=0.38]{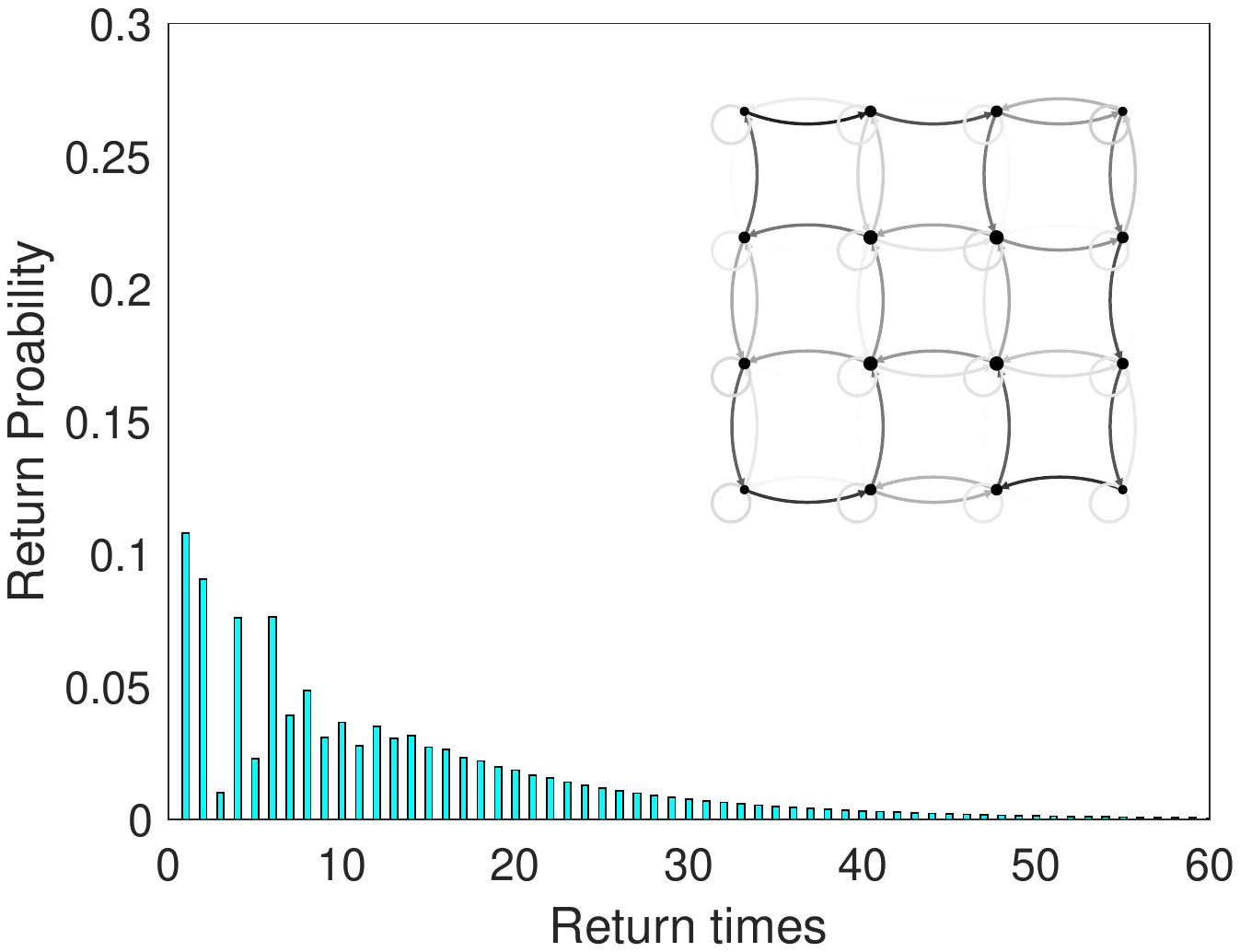}}
\subfigure[ MaxEntropyRate chain on $4\times4$ grid]{
\includegraphics[scale=0.38]{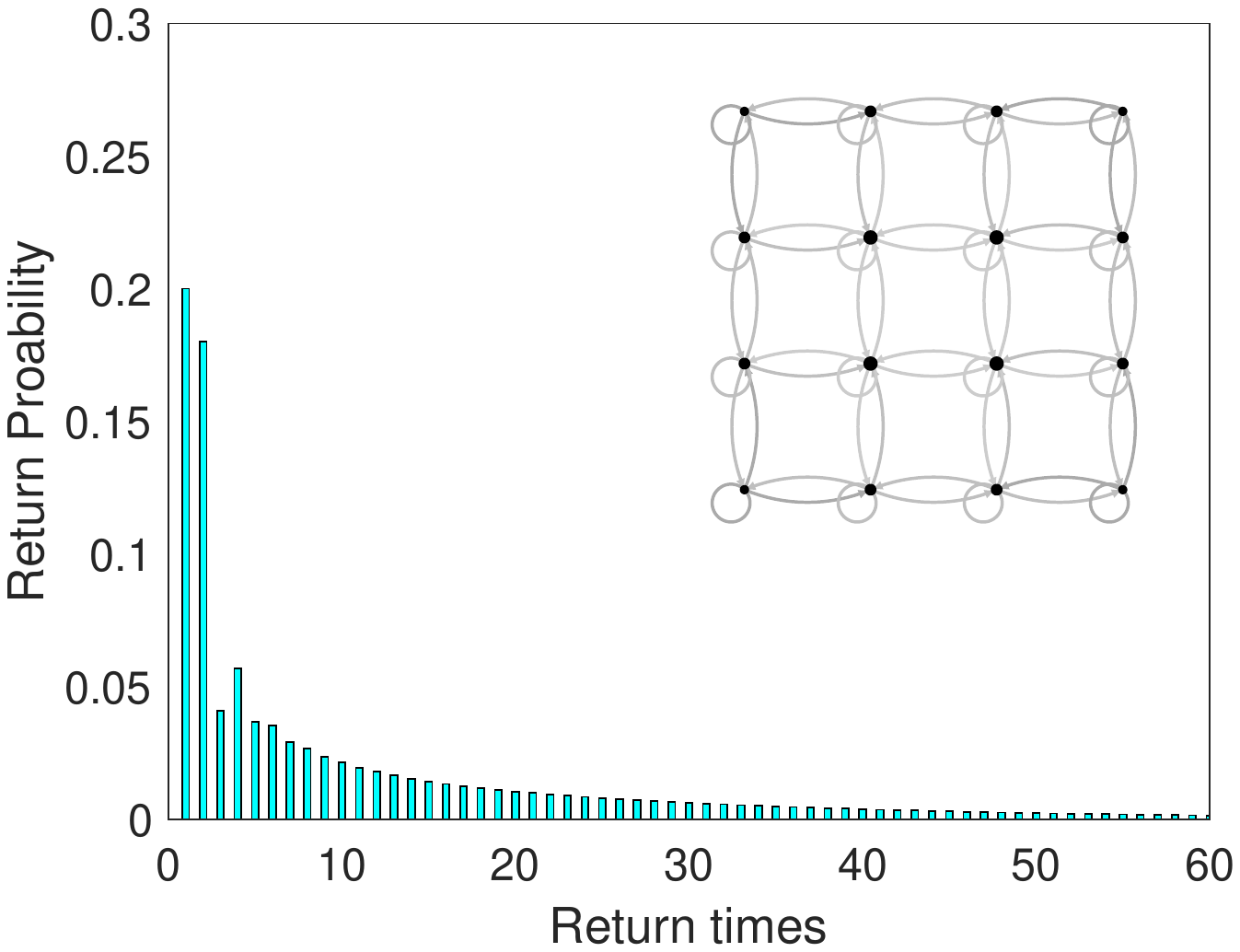}}
\subfigure[MinKemeny chain on $4\times4$ grid]{
\includegraphics[scale=0.38]{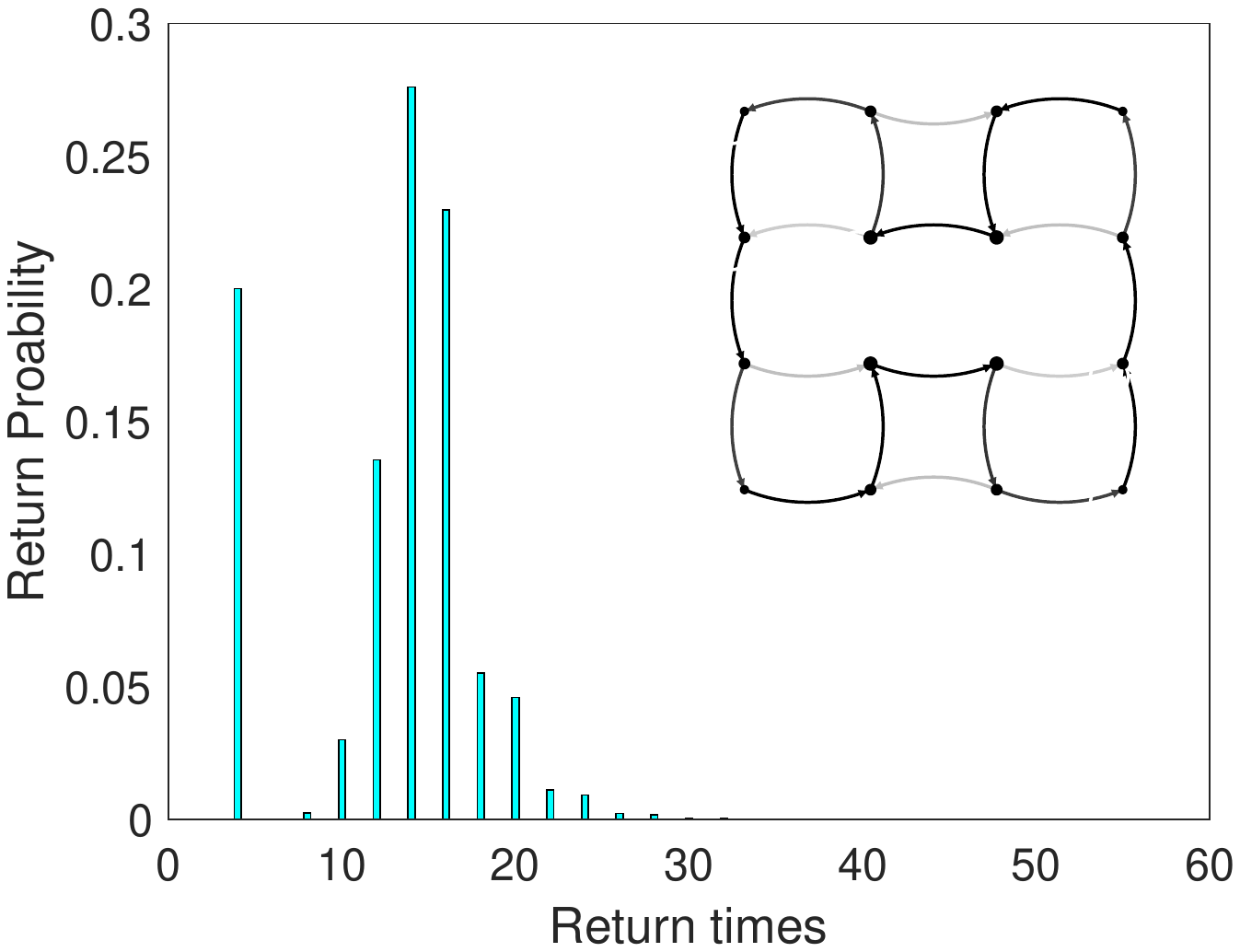}}
\caption{Return time distributions of node $6$ (i.e., second node on the second row) on a $4\times4$ grid with stationary distribution $\bm{\pi}$ proportional to the node degree and unitary travel times. The node size is proportional to the stationary distribution.}\label{Fig:returndistribution_grid}
\end{center}
\end{figure*}

\begin{flushleft}
{\bf{Chains on graphs with unitary travel times}}
\end{flushleft}

\emph{Comparison}: We consider $2$ simple undirected graphs and solve for the MaxReturnEntropy chain, the MaxEntropyRate chain and the MinKemeny chain for each case. We compare the return time entropy, the entropy rate, and the Kemeny constant of these chains in Table~\ref{comptable}. The stationary distribution of the ring graph is set to be $\bm{\pi}=[1/12,1/6,\dots,1/12,1/6]^\top$, and the stationary distribution of of grid is proportional to the degree of nodes. To evaluate the value of $\Hrt(P)$, we set $\eta=10^{-2}$. From the table, we notice that the MaxReturnEntropy chain has the highest value of the return time entropy in both cases. It also has relatively good performance in terms of the entropy rate and the Kemeny constant, which indicates that the MaxReturnEntropy chain is potentially a good combination of speed (expected traversal time) and unpredictability. Furthermore, it is clear that \eqref{eq:relations}, which characterizes the relationship between the entropy rate and the return time entropy, holds.

\begin{table}[h]
\centering
\caption{Comparison between different chains on different graphs}\label{comptable}
\begin{tabular}{ |c|c|c|c|c| }
\hline
Graph&Markov chains &$\Hrt(P)$&$\Hrate(P)$ & \makecell{Kemeny\\ constant} \\ \hline
\multirow{3}{*}{$8$-node ring}
&MaxReturnEntropy   &2.4927&0.8698&10.0479 \\
&MaxEntropyRate  &2.3510&0.9883&19.5339 \\
&MinKemeny&1.9641 &0.4621 &6.1667\\ \hline
\multirow{3}{*}{$4$-by-$4$ grid}
&MaxReturnEntropy   &3.6539&0.9491&16.3547 \\
&MaxEntropyRate  &3.2844&1.4021&30.8661 \\
&MinKemeny&2.0990 &0.2188 &10.0938\\ \hline
\end{tabular}
\end{table}

\emph{Observations}: In computing the MaxReturnEntropy chain, we observe some interesting properties of our problem. First, when solving Problem \ref{probLP} by the gradient projection method with different initial conditions, we found different optimal solutions, and they have slightly different optimal values. This suggests that Problem \ref{prob:weightedentropy} is unlikely to be a convex problem. Secondly, the global optimal solution to Problem \ref{prob:weightedentropy} is possibly not unique in general. For instance, for an undirected ring graph with even number of nodes and certain stationary distribution, exchanging the probability of going right and that of going left for all nodes does not change the return time entropy. Thirdly, the optimal solution to Problem \ref{prob:weightedentropy} is likely to be nonreversible because none of the approximate optimal solutions we have encountered are reversible. This again indicates that the MaxReturnEntropy chain is a good combination of unpredictability and speed. Fourth, even if we set the edge weight $\epsilon=0$, the MaxReturnEntropy chain is always irreducible.



\emph{Intuitions}: In order to provide intuitions for the maximization of the return time entropy, we compare and plot the chains as well as the return time distribution of a same node on the $8$-node ring graph and the $4\times4$ grid graph in Fig. \ref{Fig:returndistribution} and Fig. \ref{Fig:returndistribution_grid}, respectively. Since the stationary distribution is fixed and identical for all chains in each case, the expectations of the probability mass functions in each figure are the same. From the figures, we note that for the MaxReturnEntropy chain, the return time distribution is reshaped so that the distribution is more spread out and it is more difficult to predict the return time. In contrast, the return time distribution for the MinKemeny chain has a predictable pattern and the return time probability is constantly $0$ for some time intervals. Moreover, from the visualization of the chains, we notice that the MaxReturnEntropy chain has a net flow on the graph, which again indicates its nonreversibility.

\begin{flushleft}
{\bf{MaxReturnEntropy and MinKemeny on a realistic map}}
\end{flushleft}

In this part, we compare the MaxReturnEntropy chain with the MinKemeny
chain on a realistic map with travel times. The problem data is taken from
\cite[Section 6.2]{SA-EF-SLS:14}: a small area in San Francisco (SF) is
modeled by a fully connected directed graph with $12$ nodes and by-car
travel times on edges measured in seconds. The map is shown in
Fig. \ref{fig:SF_location_crime_map}. The importance of the a location
(node) is characterized by the the number of crimes recorded at that place
during a specific period, and the surveillance agent should visit the
places with higher crime rate more often. The visit frequency is set to be
$ [\frac{133}{866}, \frac{90}{866},\frac{89}{866},\frac{87}{866},
  \frac{83}{866}, \frac{83}{866},\frac{74}{866}, \frac{64}{866},
  \frac{48}{866}, \frac{43}{866}, \frac{38}{866}, \frac{34}{866}]^\top $.
For simplicity, we quantize the travel times by treating a minute as one unit of time, i.e., dividing the travel times by $60$ and round the result to the smallest integer that is larger than it, and by doing so, we have $w_{\max}=9$. The pairwise travel times are recorded in Table \ref{tb:pairwiseTime}.


\begin{figure}[htbp]
\begin{center}
 \includegraphics[width=0.8\linewidth,height=.15\textheight]{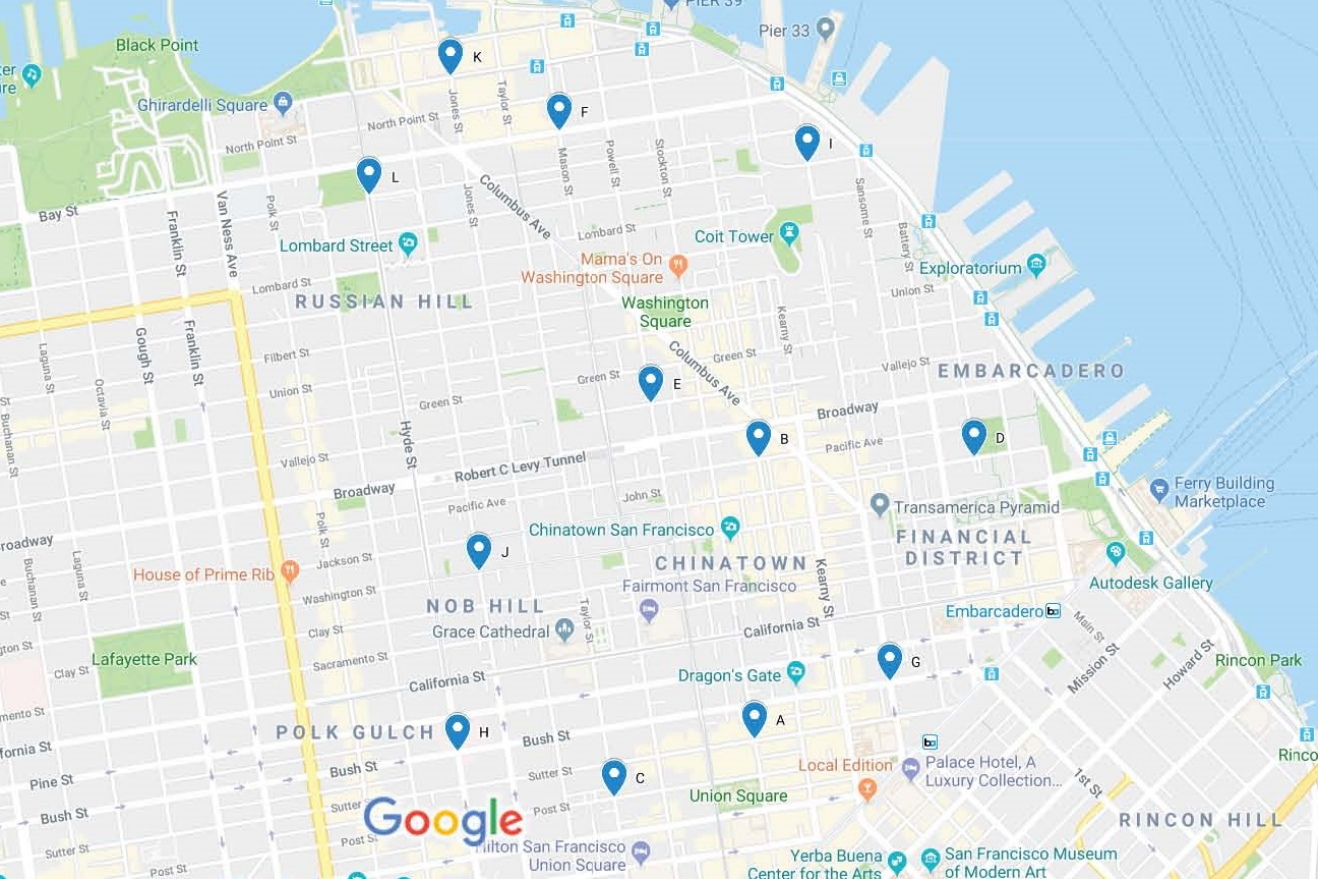}
\caption{San Francisco (SF) crime map from \cite[Section 6.2]{SA-EF-SLS:14}.}\label{fig:SF_location_crime_map}
\end{center}
\end{figure}

\begin{table}[h]
\centering
\caption{The quantized pairwise by-car travel times on SF crime map}\label{tb:pairwiseTime}
\begin{tabular}{ ccccccccccccc }
\hline\rule{0pt}{1\normalbaselineskip}
Location&A&B&C&D&E&F&G&H&I&J&K&L \\ \hline\rule{0pt}{1\normalbaselineskip}
A&1&	3&	3&	5&	4&	6&	3&	5&	7&	4&	6&	6\\
B&3&	1&	5&	4&	2&	4&	4&	5&	5&	3&	5&	5\\
C&3&	5&	1&	7&	6&	8&	3&	4&	9&	4&	8&	7\\
D&6&	4&	7&	1&	5&	6&	4&	7&	5&	6&	6&	7\\
E&4&	3&	6&	5&	1&	3&	5&	5&	6&	3&	4&	4\\
F&6&	4&	8&	5&	3&	1&	6&	7&	3&	6&	2&	3\\
G&2&	5&	3&	5&	6&	7&	1&	5&	7&	5&	7&	8\\
H&3&	5&	2&	7&	6&	7&	3&	1&	9&	3&	7&	5\\
I&8&	6&	9&	4&	6&	4&	6&	9&	1&	8&	5&	7\\
J&4&	3&	4&	6&	3&	5&	5&	3&	7&	1&	5&	3\\
K&6&	4&	8&	6&	4&	2&	6&	6&	4&	5&	1&	3\\
L&6&	4&	6&	6&	3&	3&	6&	4&	5&	3&	2&	1  \\\hline
\end{tabular}
\end{table}

First, we compare three key metrics of the MaxReturnEntropy chain and MinKemeny chain. The results are reported in Table \ref{tb:SFcompare}. It can be observed that the MaxReturnEntropy chain is much better than the MinKemeny chain regarding the return time entropy and the entropy rate. This better performance in terms of the unpredictability is obtained at the cost of being slower as indicated by the larger weighted Kemeny constant.

\begin{table}[h]
\centering
\caption{Comparison between different chains on SF crime map}\label{tb:SFcompare}
\begin{tabular}{ |c|c|c|c| }
\hline
Markov chains &$\Hrt(P)$&$\Hrate(P)$ & \makecell{Weighted Kemeny\\ constant} \\ \hline
MaxReturnEntropy &5.0078&1.7810&63.6007\\\hline
MinKemeny&2.4678&0.6408&24.2824\\ \hline
\end{tabular}
\end{table}

We also plot the return time distribution of location A in Fig. \ref{fig:SF_returndistr}. Apparently, the MaxReturnEntropy chain spreads the return time probabilities over the possible return times and it is hard to predict the exact time the surveillance agent comes back to the location. In contrast, the MinKemeny chain tries to achieve fast traversal on the graph and the return times distribute over a few intervals.

\begin{figure}[htbp]
\begin{center}
\subfigure[ MaxReturnEntropy chain on SF crime map]{
\includegraphics[scale=0.5]{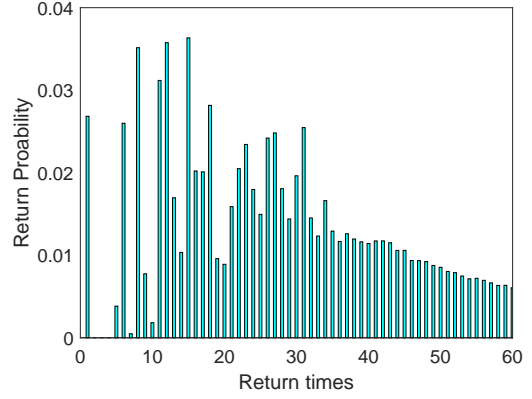}}
\subfigure[ MinKemeny chain on SF crime map]{
\includegraphics[scale=0.5]{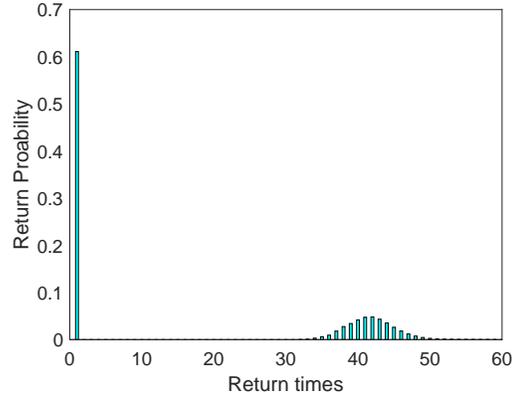}}
\caption{Return time distributions of location A on SF crime map. Note that the scales of the vertical axes are different in the two figures.}\label{fig:SF_returndistr}
\end{center}
\end{figure}


\subsection{Application to the Robotic Surveillance Problem}\label{subsec:app-robotics}
In this subsection, we provide simulation results in the application
of robotic surveillance.

\emph{Setup}: Consider the scenario where a single agent performs the
surveillance task by moving randomly according to a Markov chain on the
road map. The intruder is able to observe the local behaviors of the
surveillance agent, e.g., presence/absence and duration between visits, and
he/she plans and decides the time of attack so as to avoid being
captured. It takes a certain amount of time for the intruder to complete an
attack, which is called the \emph{attack duration} of the intruder. A
successful detection/capture happens when the surveillance agent and the
intruder are at the same location and the intruder is attacking.

\emph{Intruder model (success probability maximizer with bounded
  patience)}: Consider a rational intruder that exploits the return time
statistics of the Markov chains and chooses an optimal attack time so as to
minimize the probability of being captured. The intruder picks a node $i$
to attack randomly according to the stationary distribution, and it
collects and learns the probability distribution of node $i$'s first return
time. Suppose the intruder and the surveillance agent are at the same node
$i$ at the beginning and the attack duration of the intruder is $\tau$. If
the intruder observes that the surveillance agent leaves the node and does
not come back for $s$ periods, he/she can attack with the probability of
being captured given by
\begin{equation}\label{eq:condcumureturn}
\sum_{k=1}^{\tau}\mathbb{P}(T_{ii}=s+k\,|\,T_{ii}> s).
\end{equation}
Mathematically speaking, \eqref{eq:condcumureturn} is the conditional cumulative return probability for the surveillance agent. Specifically for $s=0$, \eqref{eq:condcumureturn} is the capture probability when the intruder attacks immediately after the agent leaves the node. Then, the optimal time of attack $s_i$ for the intruder is given by
\begin{equation}\label{eq:attackinstant}
s_i=\argmin_{0\leq s\leq S_i}\{\sum_{k=1}^{\tau}\mathbb{P}(T_{ii}=s+k\,|\,T_{ii}> s)\}.
\end{equation}
The reason there is an upper bound $S_i$ on $s$ is that the event
$T_{ii}> s$ happens with very low probability when $s$ is large,
and the intruder may be unwilling to wait for such an event to happen. Let $\delta\in(0,1)$ be the \emph{degree of impatience} of the intruder, then $S_i$ can be chosen as the minimal positive integer such that the following holds,
\begin{equation*}
\mathbb{P}(T_{ii}\geq S_i) \leq \delta,
\end{equation*}
where a smaller $\delta$ implies a larger $S_i$ and a more patient intruder. In other words, when $\delta$ is small, the intruder is willing to wait for a rare event to happen.
Note that the value of $S_i$ is also dependent on the node $i$ that the intruder chooses to attack, and thus the argmin in \eqref{eq:attackinstant} is over different ranges when the intruder attacks different nodes. In summary, the intruder is dictated by two parameters: the attack duration $\tau$ and the degree of impatience $\delta$, and the strategy for the intruder is as follows: waits until the event
that the surveillance agent leaves and does not come back for the
first $s_i$ steps happens, then attacks immediately.

From the surveillance point of view, the probability of capturing the rational intruder when he/she attacks node $i$ is
\begin{equation*}
\mathbb{P}_i(\text{Capture}) = \sum_{k=1}^{\tau}\mathbb{P}(T_{ii}=s_i+k\,|\,T_{ii}> s_i),
\end{equation*}
and the performance of the Markov chains can be evaluated by the total probability of capture as follows
\begin{equation}\label{eq:prob_capture}
\mathbb{P}(\text{Capture}) = \sum_{i=1}^{n}\pi_i\mathbb{P}_i(\text{Capture}).
\end{equation}



\emph{Simulation results}: Designing an optimal defense mechanism for the rational intruder is an interesting yet challenging problem in its own. Instead, we use the MaxReturnEntropy chain as a heuristic solution and compare its performance with other chains. In the following, we consider two types of graphs: the grid graph and the SF crime map. The degree of impatience of the intruder is set to be $\eta = 0.1$ in this part.

We first consider a $4\times4$ grid and plot the probability of
capture defined by \eqref{eq:prob_capture} for the chains in comparison in Fig. \ref{fig:comparison_grid}. It can be observed that, when defending against the rational intruder described above, the MaxReturnEntropy chain outperforms all other
chains when the attack duration of the intruder is small or moderate. The unpredictability in the return time prevents the rational intruder from taking advantage of the visit statistics learned from the observations. The MinKemeny chain, which emphasizes a faster traversal, has a hard
time capturing the intruder when the attack duration of the intruder is small. This is because the agent moves in a
relatively more predictable way, and the return time statistics may have a pattern that could be exploited. The MaxEntropyRate chain has the in-between performance. 


\begin{figure}[htbp]
\begin{center}
\includegraphics[scale=0.5]{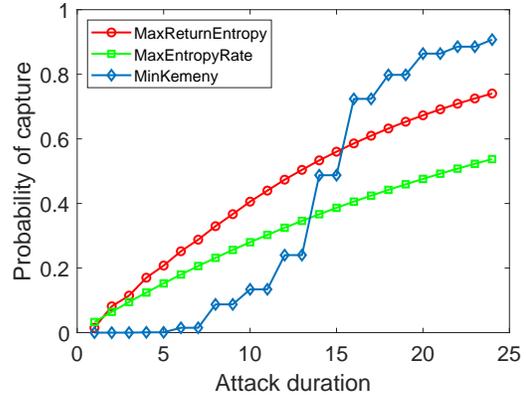}
\caption{Performance of different chains on a $4\times4$ grid.}\label{fig:comparison_grid}
\end{center}
\end{figure}


%
%

For the SF crime map, we use the same problem data as described in Subsection \ref{subsec:comp-MRE}. Since the MaxEntropyRate chain does not generalize to the case when there are travel times, we compare the performance of the MaxReturnEntropy chain and the MinKemny chain. Again, The MaxReturnEntropy chain outperforms the MinKemeny chain when the attack duration of the intruder is relatively small.

\begin{figure}[htbp]
\begin{center}
\includegraphics[scale=0.5]{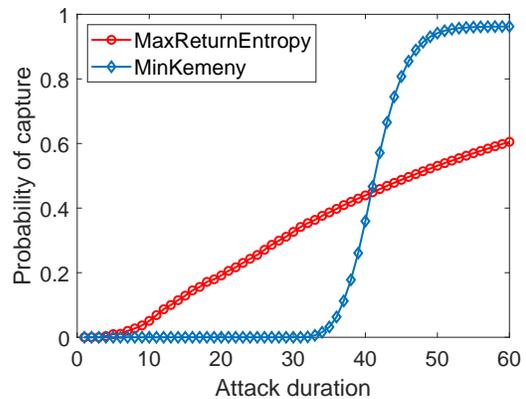}
\caption{Performance of different chains on the SF crime map.}\label{fig:SF_crime_map}
\end{center}
\end{figure}

\emph{Summary}: The simulation results presented in this subsection demonstrate that the MaxReturnEntropy chain is an effective strategy against the intruder with reasonable amount of knowledge and level of intelligence, particularly when the attack duration of the intruder is small or moderate. With the property of both unpredictability and speed, the MaxReturnEntropy chain should also work well in a much more broader range of scenarios.

\section{Conclusion}\label{sec:Conclusion}
In this paper, we proposed and optimized a new metric that quantifies the unpredictability of Markov chains over a directed strongly connected graph with travel times, i.e., the return time entropy. We characterized the return time probabilities and showed that optimizing the return time entropy is a well-posed problem. For the case of unitary travel times, we established an upper bound for the return time entropy by using the maximum entropy principle and obtained an analytic solution for the complete graph. We connected the return time entropy with the well-known entropy rate of Markov chains and showed that the return time entropy is lower bounded by the entropy rate and upper bounded by $n$ times the entropy rate. In order to solve the optimization problem numerically, we approximated the return time entropy as well as a practically useful conditional return time entropy by the truncated return time entropy. We derived the gradient of the truncated return time entropy and proposed to solve the problem by the gradient projection method. We applied our results to the robotic surveillance problem and found that the chain with maximum return time entropy is a good trade-off between speed and unpredictability, and it performs better than several existing chains against a rational intruder.

A number of problems are still open. First of all, a simple closed-form expression for the return time entropy would enable us to establish more properties of the objective function and thus make the optimization problem more tractable. Second, it is interesting to design a best Markov chain directly that defends against the intruder model proposed in this paper. Third, how to generalize the results to the case of multiple robots remains to be investigated. Fourth, we believe there are more application scenarios for Markov chains where the return time entropy is an appropriate quantity to optimize.

\bibliographystyle{plainurl}
\bibliography{alias,Main,FB}

\end{document}